\DeclareFontFamily{U}{MnSymbolC}{}
\DeclareSymbolFont{MnSyC}{U}{MnSymbolC}{m}{n}
\DeclareFontShape{U}{MnSymbolC}{m}{n}{
	<-6>  MnSymbolC5
	<6-7>  MnSymbolC6
	<7-8>  MnSymbolC7
	<8-9>  MnSymbolC8
	<9-10> MnSymbolC9
	<10-12> MnSymbolC10
	<12->   MnSymbolC12}{}
\DeclareMathSymbol{\intprod}{\mathbin}{MnSyC}{'270}
\newtheoremstyle{problemstyle}
{\topsep} % Space above
{20pt} % Space below
{} % Body font
{} % Indent amount
{\scshape} % Theorem head font
{\newline} % Punctuatnion after theorem head
{1em} % Space after theorem head
{} % Theorem head spec (can be left empty, meaning `normal')
\newtheorem*{conjecture}{Conjecture}
\newtheorem{prop}{Proposition}
\numberwithin{prop}{section}
\newtheorem{cor}{Corollary}
\numberwithin{cor}{section}
\newtheorem{lem}{Lemma}
\numberwithin{lem}{section}
\newtheorem{thm}{Theorem}
\numberwithin{thm}{section}
\theoremstyle{problemstyle}
\theoremstyle{definition}
\newtheorem{defs}{Definition}
\numberwithin{defs}{section}
\newtheorem{egs}{Example}
\numberwithin{egs}{section}
\theoremstyle{remark}
\newtheorem{rem}{Remark}
\numberwithin{rem}{section}
\newcommand{\GL}{\text{GL}}
\newcommand{\id}{\text{Id}}
\newcommand{\into}{\hookrightarrow}
\newcommand{\bR}{\mathbb{R}}
\newcommand{\msO}{\mathscr{O}}
\newcommand{\mfX}{\mathfrak{X}}
\newcommand{\Hom}{\text{Hom}}
\newcommand{\End}{\text{End}}
\newcommand{\bge}{\begin{equation*}}
	\newcommand{\ene}{\end{equation*}}
\newcommand{\bra}{\langle}
\newcommand{\ket}{\rangle}
\title{Stratified Vector Bundles: Examples and Constructions}
\author{Ethan Ross}
\begin{document}
	
	\maketitle
	
\section{Introduction}
A stratified space can roughly be thought of as topological space which come equipped with a partition into smooth manifolds.This class of space arises in a wide variety of contexts like the study of real analytic varieties \cite{whitney_local_2015}, function spaces \cite{mather_stratifications_1973} \cite{thom_ensembles_1969}, orbispaces \cite{crainic_orbispaces_2018}, symplectic reduction \cite{sjamaar_stratified_1991}, and intersection homology theory \cite{goresky_intersection_1980}. As with manifolds, there is a way of equipping stratified spaces with smooth structures and obtaining stratified versions of all the usual flora and fauna of differential topology, like smooth maps, de Rham complexes, integration, etc. However, the notion of the tangent bundle of a stratified space \cite{pflaum_analytic_2001} is not actually a vector bundle since the rank of each tangent space may vary. Thus, we develop in this paper a notion of a stratified vector bundle to fit stratified tangent bundles into a wider context.

Another major motivation for developing stratified vector bundles is to use them for the purposes of quantization. In particular, in the Kostant-Souriau-Weil picture of quantization, one requires three pieces of initial data: a symplectic manifold, a complex line bundle with connection, and a polarization\footnote{A polarization of a symplectic manifold is a Lagrangian involutive subbundle of the complexified tangent bundle.}; all subject to various compatibility conditions \cite{hall_quantum_2013}. In particular, we have 3 vector bundles in this picture: the complexified tangent bundle, the line bundle, and the polarization. Quantization should morally produce a Hilbert space, so various auxiliary constructions have been devised such as half-forms and metaplectic corrections to do this. However, “singular” versions of polarizations and symplectic manifolds have appeared, generally through symplectic reduction \cite{sjamaar_stratified_1991} or integrable systems \cite{hamilton_locally_2010}. In these cases,  we lose at least one of the vector bundles required for the constructions used to construct Hilbert spaces. One possible solution for these problems, is to slightly widen the category of vector bundles to a setting more suitable for singular, usually stratified objects. Thus, opening the door for stratified versions of half-forms or metaplectic corrections.

As will be discussed in section 2, this paper is not the first one to define objects called “stratified vector bundles”. The authors Kucharz-Kurdyka \cite{kucharz_stratified-algebraic_2018} and Baues-Ferrario \cite{baues_k-theory_2003} each have inequivalent definitions of stratified vector bundles that arise from their respective work in algebraic geometry and K-theory. The Kucharz-Kurdyka definition is a special case of the definition given in this paper and we believe that the Baues-Ferrario one is as well. This paper is geared more towards a differential geometric approach than either of the previous two mentioned works, so there is no conflict in the material.  It has also come to the attention of the author that Scarlett in \cite{scarlett_smooth_2023} has also developed, independently, a definition of smooth stratified vector bundles. Scarlett's definition is a fair bit stronger than the definition provided here and is used to produce objects not considered in this paper like generalizations of frame bundles. Thus, our two papers are complementary.

It should also be noted that many of the examples considered in this paper could fit into an even wider context of stratified groupoids as developed by Farsi-Pflaum-Seaton \cite{farsi_differentiable_2023}. The differentiable stratified algebroids defined in that paper  can be be characterized as differentiable stratified vector bundles as defined in section 2 with a suitable bracket on sections. It would be worthwhile to study if any interesting  representation theory could be obtained from stratified groupoids and stratified vector bundles.

In section 2, we will discuss the basic definitions of the theory of stratified vector bundles. First, we  give the definition of stratified spaces that will be used in this paper. As we will discuss, there are a few different definitions in the literature so it is necessary to fix one for our purposes. We will also discuss Pflaum's definition of a smooth structures \cite{pflaum_analytic_2001} on stratified spaces since these are where the first non-trivial examples of stratified vector bundles arise. From there we will give the definition of a stratified vector bundle and stratified versions of vector bundle morphisms and sections. To close, we give an alternative characterization of stratified vector bundles in terms of their scalar multiplication in the spirit of Grabowski-Rotkiewicz \cite{grabowski_higher_2009}.

In section 3, we then move on to two important families of examples of stratified vector bundles. The first family arises from the theory of Stefan-Sussmann singular foliations. These are a natural and well-studied class of \enquote{singular vector bundles} arising from Lie groupoids and the theory of Riemannian foliations. We show that if the leaves can be grouped together into embedded submanifolds in a suitable fashion, then there is a stratified vector bundle underlying the foliation. The next family comes from equivariant vector bundles. Here, we will develop a procedure for taking a modified quotient of the total space of a vector bundle in order to obtain a stratified vector bundle on the quotient of the base. This procedure generalizes the construction of vector bundles over the base of a principal bundle by taking quotients of equivariant vector bundles over the total space \cite{vistoli_grothendieck_2005}. 

In the final section, we extend results about applying linear functors fibre-wise to vector bundles to the case of stratified vector bundles. Here we will introduce a version of \enquote{local triviality} for stratified vector bundles, called an injective structure, and a variant of the famous Whitney A condition for such bundles. We then prove that provided the linear functor preserves orthogonal projections, then it can be fibre-wise applied to a Whitney A stratified vector bundle to canonically obtain another stratified vector bundle.

In a future paper, we will extend the results in this paper about equivariant vector bundles to VB groupoids \cite{bursztyn_vector_2016}. We also think the technology of open quotient vector bundles and kernel maps developed by Resende-Santos \cite{resende_open_2017} could be useful for applying more general linear functors to more general stratified vector bundles, not just Whitney A ones. Furthermore, other elements of vector bundle theory like characteristic classes should be extendable to stratified vector bundles, at least in the case of stratified vector bundles over quotient spaces.

I would like to thank the following people for their contributions to this paper. My supervisor Dr. Lisa Jeffrey for pointing me towards resources and helping with the massive job of editing the drafts of this paper. I would also like to thank Dr. Maarten Mol for all the helpful discussions about the minutiae of stratified spaces, particularly the annoying point set topology issues that arise in this singular world. Finally, I would like to thank the reviewer for pointing out numerous errors and typos, as well as substantial improvements to the bibliography.

\tableofcontents

\section{Stratified Vector Bundles}
Very roughly speaking, a stratified space is just a topological space $X$ with a partition $\Sigma$ into smooth manifolds. A stratified vector bundle can then be understood as a special kind of stratified space where the pieces of the partition are demanded to not just be any arbitrary collection of smooth manifolds, but must be the total spaces of smooth vector bundles. In this section, we will be making all the above ideas precise, giving elementary examples, and we will finish with an alternate characterization in the spirit of Grabowski-Rotkiewicz \cite{grabowski_higher_2009} in terms of monoid actions. This should provide at least a partial justification that stratified vector bundles are the \enquote{correct} notion of a vector bundle over a singular space. 

\subsection{Stratified Spaces}
As we will discuss, there are many competing definitions of stratified spaces in the literature. So to begin, we shall give the definition to be used in this paper. 

\begin{defs}\label{strdef}
	A {\bf stratified space} is a pair $(X,\Sigma)$, where $X$ is a Hausdorff, second countable, paracompact topological space and $\Sigma$ is a locally finite partition of $X$ into connected, locally closed subspaces satisfying the following conditions.
	\begin{itemize}
		\item[(i)] Each piece $S\in\Sigma$ is a topological manifold in the subspace topology.
		\item[(ii)] Each piece $S\in\Sigma$ is equipped with the structure of a smooth manifold.
		\item[(iii)] ({\bf Frontier Condition}) If $S,R\in\Sigma$ are two pieces with $S\cap\overline{R}\neq\emptyset$, then $S\subset \overline{R}$.
	\end{itemize}
	If $(X,\Sigma)$ is a stratified space, call $\Sigma$ a {\bf stratification} of $X$.
\end{defs}

\begin{rem}
	There are a few different definitions of a stratified space in the literature. The definition being used in this paper essentially goes back to Mather \cite[Definition 8.1]{mather_notes_2012}. There is also the classical inductive definition used by Goresky and MacPherson \cite[Definition 1.1]{goresky_intersection_1980} where we have a filtration of a topological space $X$
	\begin{equation}\label{filt}
		\emptyset=X_{-1}\subset X_0\subset X_1\subset\cdots\subset X_n=X,
	\end{equation}
	so that the connected components of each of the differences $X_i\setminus X_{i-1}$ are smooth manifolds of dimension $i$ and so that locally there exists a homeomorphism
	\begin{equation}\label{loccone}
		X_i\cong \bR^i\times C(Y),
	\end{equation}
	where $C(Y)$ is the cone of a stratified space of lower dimension. This is a special case of Definition \ref{strdef} since given a stratified space in the sense of \ref{strdef} $(X,\Sigma)$, we can form a filtration of $X$ by defining for each $i\geq 0$
	\bge
	\Sigma_i:=\{S\in\Sigma \ | \ \dim S\leq i\}
	\ene
	and setting
	\bge
	X_i:=\bigcup_{S\in\Sigma_i}S.
	\ene
	If there exists $n$ so that $X_n=X$, then we get a filtration like equation (\ref{filt}) where the connected components of the complements $X_i\setminus X_{i-1}$ are smooth manifolds of dimension $i$. However, there is no guarantee that equation (\ref{loccone}) will hold. 
	
	Nevertheless, Mather \cite[Theorem 8.3]{mather_stratifications_1973} was able to show that if $X$ comes equipped with a Whitney B structure (see Definition \ref{whitdef}), then (\ref{loccone}) will hold. Furthermore, Nocera-Volpe \cite[Theorem 3.8]{nocera_whitney_2023} showed that the homeomorphisms can be chosen in such a way that they are conically smooth in the sense of Ayala-Francis-Tanaka \cite{ayala_local_2017}.
	
	Another definition that looks a lot more like the one given above is Pflaum's notion of a decomposed space \cite[Definition 1.1.1]{pflaum_analytic_2001}.  A stratification in this context is then a structure associated to germs of closed sets. This is what Crainic-Mestre \cite{crainic_orbispaces_2018} refer to as a  \enquote{germ-stratification}. Thankfully, every decomposed space induces a germ-stratification and hence the concepts are coherent. It should also be noted that this idea of using germs of closed sets to define a stratification goes back to Mather \cite[pages 199-200]{mather_stratifications_1973}. Although his definition is even more general as his decomposed spaces (what he terms prestratified spaces) need not satisfy axioms (i) and (ii) in Definition \ref{strdef}, that is, the pieces of the partition need not be topological manifolds.
	
	Another notion of stratification which only slightly differs from the one used in this paper can be found in Crainic-Mestre \cite{crainic_orbispaces_2018} where the frontier condition is slightly different. Not only do the authors assume that if two strata $S,R$ satisfy $S\cap \overline{R}\neq\emptyset$, then $S\subset\overline{R}$; but furthermore, they also assume that if $S\neq R$, then $\dim(S)<\dim(R)$. This may be too powerful an assumption for some of the more interesting examples we will be discussing later on.
\end{rem}

\begin{egs}
	Let $M$ be a smooth manifold. Letting $\Sigma$ denote the set of connected-components of $M$, then $(M,\Sigma)$ is a stratified space.
\end{egs}

\begin{egs}
	Let $X$ be a finite CW complex and let $\Sigma$ denote the collection of relative interiors of disks attached to $X$. Since each $D\in\Sigma$ of dimension $n$ can be identified with the unit open ball centred at the origin in $\bR^n$, we can equip $D$ with a smooth structure. The frontier condition follows from the attaching maps.
\end{egs}

\begin{egs}
	Suppose $M$ is a smooth manifold and $G$ a Lie group acting on $M$ properly, that is, the map
	\bge
	G\times M\to M\times M;\quad (g,x)\mapsto (g\cdot x,x)
	\ene
	is proper. For the following discussion, we will be following Pflaum \cite{pflaum_analytic_2001}.
	
	For each $x\in M$, write $G_x$ for the stabilizer group of $x$. Then, for subgroup $H\leq G$, we may define the subsets
	\begin{align}\label{stabsubs}
		M_H&:=\{x\in M \ | \ G_x=H\}\\
		M_{(H)}&:=\{x\in M \ | \ G_x\text{ is conjugate to } H\}\\
		M^H&:=\{x\in M \ | \ H\subset G_x\}
	\end{align}
	The set $M_{(H)}$ is called {\bf the set of points with orbit type $H$.}
	
	Note that if $x\in M_H$, then $H$ is compact. Furthermore, writing $N_x:=T_x M/ T_x(G\cdot x)$ for the normal space to the orbit of $x$, we get a $G$-equivariant diffeomorphism from $G\times_H N_x$ onto a $G$-invariant open neighbourhood $U$ of $x$ such that $[g,0]$ gets mapped to $g\cdot x$. Such a map is called a {\bf slice chart about $x$}. 
	
	Observe that in a slice, we have
	\bge
	M_H=N_G(H)\times_H N_x^H=N_G(H)/H\times N_x^H
	\ene
	and
	\bge
	M_{(H)}=G\times_H N_x^H=G/H\times N_x^H,
	\ene
	where $N_G(H)\leq G$ denotes the normalizer of $H$. Hence, it follows that the connected components of $M_H$ and $M_{(H)}$ are embedded submanifolds of $M$. Furthermore, using slices and induction on the dimension of $M$, one may also show as Pflaum does that
	\begin{itemize}
		\item There are only finitely many orbit types, i.e. the set $\{M_{(H)} \ | \ H\leq G\}$ is finite. 
		\item If $H,K\leq G$ are subgroups such that $H$ is conjugate to a subgroup of $K$, then $M_{(H)}\cap \overline{M_{(K)}}$ is open and closed in $M_{(H)}$.
	\end{itemize}
	Thus, if we partition $M$ by orbit types, then pass to connected components, the resulting partition is a stratification of $M$ by embedded locally closed submanifolds. This leads us to the next definition.
	
	\begin{defs}\label{orbitypestrat}
		Write $\mathcal{S}_G(M)$ for the stratification of $M$ by connected components of orbit type equivalence classes. Call $\mathcal{S}_G(M)$ the {\bf stratification of $M$ by orbit types.}
	\end{defs}
	
	This isn't the only stratification we get out of the action of $G$ on $M$. Since the action is proper and quotients by group actions are open, it follows that $M/G$ is Hausdorff. Furthermore, one observes that in a slice, we have
	\bge
	M_{(H)}/G\cong N_x^H
	\ene
	Furthermore, the elements of $\mathcal{S}_G(M)$ are $G$-invariant submanifolds. Hence, $M/G$ comes equipped with a canonical stratification with the strata being of the form $S/G$, where $S\in\mathcal{S}_G(M)$. 
	
	\begin{defs}\label{canonicalstrat}
		Define
		\bge
		\mathcal{S}_G(M/G):=\{S/G \ | \ S\in\mathcal{S}_G(M)\}.
		\ene
		Call $\mathcal{S}_G(M/G)$ the {\bf canonical stratification of $M/G$.}
	\end{defs}
\end{egs}

\begin{egs}
	Due to Pflaum-Posthuma-Tang \cite{pflaum_geometry_2014}, we can generalize the above to proper Lie groupoids. Suppose now that we have a Lie groupoid $G\rightrightarrows M$ \cite[Definition 13.2]{crainic_lectures_2021} with source and target maps $s$ and $t$, respectively. We say the groupoid is {\bf proper} if the map
	\bge
	G\to M\times M;\quad g\mapsto (s(g),t(g))
	\ene
	is proper. For each $x\in M$, define its isotropy group $G_x$ by
	\bge
	G_x:=s^{-1}(x)\cap t^{-1}(x).
	\ene
	$G_x$ is canonically a Lie group. Furthermore, we can define the orbit through $x$ by
	\bge
	\msO_x:=t(s^{-1}(x)).
	\ene
	Since $G$ is proper, it follows that $\msO_x$ is an embedded submanifold of $M$. Furthermore, observe that $G_x$ acts linearly on $T_x\msO_x$, and hence acts linearly on the normal space $N_x:=T_xM/T_x\msO_x$. $N_x$ is called the normal representation of $G$ at $x$. 
	
	Say two points $x,y\in M$ have the same {\bf Morita type} if there exists an isomorphism of Lie groups $\phi:G_x\to G_y$ and a linear isomorphism $\psi:N_x\to N_y$ such that the diagram commutes
	\bge
	\begin{tikzcd}
		G_x\times N_x \arrow[r,"\phi\times \psi"]\arrow[d]& G_y\times N_y\arrow[d]\\
		N_x\arrow[r,"\psi"] & N_y
	\end{tikzcd}
	\ene
	where the vertical maps are the action maps. This defines an equivalence relation on $M$, hence a partition. Passing to the connected components, we obtain a partition $\mathcal{S}_G(M)$ on $M$, called the {\bf stratification by Morita types}. It follows using the linearization theorem for proper Lie groupoids \cite[Theorem 1]{crainic_linearization_2013} that $\mathcal{S}_G(M)$ indeed defines a stratification of $M$ into locally closed submanifolds \cite[Theorem 5.3]{pflaum_geometry_2014}. Furthermore, if we do the special case of an action groupoid $G\times M\rightrightarrows M$ with the action of $G$ on $M$ being proper, then the Morita type stratification agrees with the orbit type stratification.
	
	Also just as above, the space of orbits $M/G$ is canonically stratified \cite[Corollary 5.4]{pflaum_geometry_2014}. Given an element $S\in \mathcal{S}_G(M)$, its image in $M/G$ is canonically a smooth manifold. Hence, we obtain the {\bf canonical stratification} of $M/G$, denoted $\mathcal{S}_G(M)$. 
\end{egs}

\begin{defs}\label{stratmorph}
	Let $(X_1,\Sigma_1)$ and $(X_2,\Sigma_2)$ be stratified spaces. A stratified morphism is a continuous map $f:X\to Y$ such that for all $S_1\in \Sigma_1$, there exists $S_2\in\Sigma_2$ such that
	\begin{itemize}
		\item[(i)] $f(S_1)\subset S_2$
		\item[(ii)] $f|_{S_1}:S_1\to S_2$ is smooth. 
	\end{itemize}
\end{defs}

\begin{egs}
	Let $G\rightrightarrows M$ be a proper Lie groupoid. Then, with respect to the Morita type stratification $\mathcal{S}_G(M)$ and the canonical stratification $\mathcal{S}_G(M/G)$ on $M/G$,  the quotient map 
	\bge
	\pi:M\to M/G
	\ene
	is a stratified morphism. 
\end{egs}

\subsubsection{Smooth Structures}
The first non-trivial examples of a smooth vector bundles are the tangent bundles of smooth manifolds. A similar story will be true for stratified vector bundles, but first we need to say what smooth means in the context of stratified spaces. As with smooth manifolds, this involves charts and atlases. However, unlike the usual notion of charts, the dimension of the co-domain is allowed to vary. Taken altogether, we will be equipping our stratified spaces with the structure of a compatible ``differentiable structure''. Please see Chapter 2 of the monograph \cite{navarro_gonzalez_c-differentiable_2003} for an extensive definition of the more general notion of a ``differentiable space''.

Much of the following discussion, including Definition \ref{smthdef}, are taken directly from Pflaum \cite{pflaum_analytic_2001}.

\begin{defs}\label{smthdef}
	Let $(X,\Sigma)$ be a stratified space. 
	\begin{itemize}
		\item[(1)] A {\bf chart} of $X$ \cite[page 26]{pflaum_analytic_2001} is an open subset $U\subset M$ and a topological embedding $\phi:U\to \bR^n$ as a locally closed subspace for some $n$ such that for all $S\in\Sigma$, $\phi(S\cap U)$ is an embedded submanifold of $\bR^n$.
		\item[(2)] Two charts $\phi:U\to \bR^n$ and $\psi:V\to \bR^n$ are said to be {\bf compatible} if for all $p\in U\cap V$, there exists open neighbourhood $W\subset W\cap V$ of $p$, open neighbourhoods $O_U,O_V\subset \bR^n$ of $\phi(W)$, $\phi(V)$ respectively, and a diffeomorphism $H:O_U\to O_V$ such that the diagram commutes
		\bge
		\begin{tikzcd}
			O_U\arrow[rr,"H"] && O_V\\
			\phi(W)\arrow[u] && \psi(W)\arrow[u]\\
			&W\arrow[ul,swap,"\phi"]\arrow[ur,"\psi"]&
		\end{tikzcd}
		\ene
		Now suppose we have two charts $\phi:U\to \bR^n$ and $\psi:V\to \bR^m$ where the dimension of the target could be different. Let $N=\text{max}\{n,m\}$, $\iota_j^N:\bR^j\into \bR^N$ be the inclusion of the first $j$ coordinates for $j\leq N$. Then $\phi$ and $\psi$ are {\bf compatible} if $\iota_n^N\circ \phi$ and $\iota_m^N\circ \psi$ are compatible. 
		\item[(3)] A {\bf smooth atlas} on $X$ consists of a covering of $X$ by compatible charts. Two atlases are equivalent if their union is again a smooth atlas. A {\bf smooth structure} on $X$ will be a choice of a maximal atlas. A stratified space together with a smooth structure will be called a {\bf differentiable stratified space}.
		\item[(4)] If $X$ is smooth, then a continuous map $f:X\to \bR$ is {\bf smooth} if for every chart $\phi:U\to \bR^n$, there exists smooth map $g:\bR^n\to \bR$ so that the diagram commutes
		\bge
		\begin{tikzcd}
			U\arrow[dr,swap,"f"]\arrow[r,"\phi"]& \bR^n\arrow[d,"g"]\\
			& \bR
		\end{tikzcd}
		\ene
		A map $F:X\to Y$ between differentiable stratified spaces is said to be {\bf smooth} if for all smooth $f:Y\to \bR$, $f\circ F:X\to \bR$ is smooth.
	\end{itemize}
\end{defs}

\begin{egs}
	As an elementary example, let us take $X=\bR$ endowed with the stratification
	\bge
	\Sigma=\{\bR_{<0},\{0\},\bR_{>0}\}.
	\ene
	Consider now the map
	\bge
	\phi:X\to \bR^2;\quad x\mapsto (x,|x|).
	\ene
	This is trivially a global chart, hence defines a smooth structure on $(X,\Sigma)$. One of the benefits of this structure is that now the absolute value function is smooth! Indeed, let
	\bge
	f:X\to \bR;\quad x\mapsto |x|
	\ene
	and
	\bge
	g:\bR^2\to \bR;\quad (x,y)\mapsto y.
	\ene
	Then, the diagram commutes
	\bge
	\begin{tikzcd}
		X\arrow[r,"\phi"]\arrow[dr,swap,"f"]& \bR^2\arrow[d,"g"]\\
		& \bR
	\end{tikzcd}
	\ene
	Hence, $f$ is smooth.
\end{egs}

\begin{egs}
	Let $M$ be a smooth manifold and let $\Sigma$ be a stratification of $M$ into embedded submanifolds. Suppose $\phi:U\subset M\to \bR^n$ is a chart of $M$ in the usual sense of a diffeomorphism onto an open subset of $\bR^n$. Then, $\phi:U\to \bR^n$ is also a chart in the stratified sense. Furthermore, an atlas $\mathcal{A}$ of $M$ in the usual sense is also an atlas of $(M,\Sigma)$ in the stratified sense as well.
\end{egs}

\begin{egs}\label{smoothorbits}
	Let $G$ be a connected Lie group and $M$ a smooth proper $G$-space. As we've seen,  $M$ together with its stratification into orbit types $\mathcal{S}_G(M)$ is canonically a differentiable stratified space since the pieces of $\mathcal{S}_G(M)$ are locally closed embedded submanifolds of $M$. The non-trivial fact is that the quotient space $M/G$ and its canonical stratification $\mathcal{S}_G(M/G)$ is also a differentiable stratified space. 
	
	The key to showing this is as follows. Let $H$ be a compact Lie group and $V$ an $H$-representation. Then, Hilbert's Finiteness Theorem for compact Lie groups \cite[Theorem 8.14.A]{weyl_classical_2016} states that the ring of $H$-invariant polynomials on $V$, $\bR[V]^H$, is finitely generated. Choosing a list of generators $p_1,\dots,p_k\in\bR[V]^H$, we get a map
	
	\begin{equation}\label{hilbertmap}
		p:V\to \bR^k;\quad v\mapsto (p_1(v),\dots,p_k(v)).
	\end{equation}
	
	Schwarz \cite{schwarz_smooth_1975} and Mather \cite{mather_differentiable_1977} both independently showed that $p$ is proper and that the induced map 
	
	\begin{equation}
		\widetilde{p}:V/H\to \bR^k
	\end{equation}
	is a proper topological embedding with image a semi-algebraic set. As a consequence of this fact, suppose we pick a point $x\in M$ with $G_x=H$ and let $U\cong G\times_H V$ be a slice chart about $x$, with $V$ an $H$-representation. Note that 
	\bge
	U/G\cong V/H
	\ene
	and $H$ is compact. Hence, choosing generators of $\bR[V]^H$, we get a singular chart $\widetilde{p}:U/G\into \bR^k$ for some $k$. The main ingredient for showing that any two charts constructed in this fashion are compatible is the following theorem of Schwarz.
	
	\begin{thm}[Schwarz {{\cite[Theorem 1]{schwarz_smooth_1975}}}]\label{schwarz}
		Let $H$ be a compact Lie group and $V$ and $H$ representation. With $p$ as in equation (\ref{hilbertmap}), the induced map
		\begin{equation}\label{hilb}
			p^*:C^\infty(\bR^k)\to C^\infty(V)^H
		\end{equation}
		is a surjection.
	\end{thm}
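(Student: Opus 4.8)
My plan is to run the classical argument of Schwarz, whose pillars are the compactness of $H$, the properness of the Hilbert map $p$, a reduction to a local statement, the slice theorem (to induct on $\dim V$), and — as the analytic heart — the Whitney extension theorem combined with \L{}ojasiewicz inequalities and Malgrange's preparation theorem. \textbf{Reductions:} averaging over $H$ against normalized Haar measure gives an $\bR$-linear projection $C^\infty(V)\to C^\infty(V)^H$, so it is enough to show that every $f\in C^\infty(V)^H$ has the form $g\circ p$. Fixing an $H$-invariant inner product we may assume $H\subseteq\mathrm{O}(V)$; then $\|v\|^2\in\bR[V]^H$, hence $\|v\|^2=Q(p_1,\dots,p_k)$ for some polynomial $Q$, which forces $p$ to be proper and $X:=p(V)$ to be closed in $\bR^k$, while the fact that invariant polynomials separate orbits of a compact group makes the induced continuous bijection $\bar p:V/H\to X$ a homeomorphism (it is proper, between locally compact spaces); by Tarski--Seidenberg $X$ is semialgebraic. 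Since $p$ surjects onto $X$, producing $g$ with $g\circ p=f$ is the same as extending the induced continuous function $\bar f$ on $X$ to an element of $C^\infty(\bR^k)$. An $H$-invariant partition of unity on $V$ subordinate to slice neighborhoods, matched with a partition of unity near $X$ in $\bR^k$ (using that $p:V\to X$ is open and proper), reduces this to a \emph{local} statement: for each $x_0\in V$ it suffices to extend $\bar f$ over $p(U)$ for some $H$-invariant neighborhood $U$ of the orbit $Hx_0$.

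\textbf{Slice induction.} By the slice theorem a small $H$-invariant $U$ is $H$-equivariantly diffeomorphic to $H\times_K N$, with $K=H_{x_0}$ the compact stabilizer and $N$ its slice representation, and then $C^\infty(U)^H\cong C^\infty(N)^K$; moreover the slice theorem is compatible with the passage to invariants, so that the local problem at $x_0$ becomes the same problem for the $K$-representation $N$ (this compatibility is the smooth analogue of Luna's slice theorem and is itself a point that must be argued with care). When the orbit $Hx_0$ is positive-dimensional one has $\dim N<\dim V$, so an induction on $\dim V$ disposes of $f|_U$ by absorbing it into $p^*C^\infty(\bR^k)$. The remaining case is $x_0$ an $H$-fixed point, where $K=H$, $N=V$, and the slice theorem is vacuous, so this case must be handled directly.

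\textbf{Fixed points, the analytic core.} Near an $H$-fixed point, choose the $p_i$ homogeneous; the Taylor expansion of $f$ at $x_0$ is $H$-invariant term by term, hence a formal power series in the $p_i$, and Borel's theorem provides $G_0\in C^\infty(\bR^k)$ with $f-G_0\circ p$ flat along the fixed-point set. It thus suffices to treat $H$-invariant $f$ that is flat there, and to show such an $f$ is $g\circ p$. For this one builds a Whitney $C^\infty$-jet on $X$ by hand: the relevant derivatives, read off on the open principal stratum of $X$ (where $p$ restricts to a submersion onto a smooth submanifold of $\bR^k$), must be extended continuously across the lower strata and shown to satisfy Whitney's Taylor-remainder conditions. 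The estimates come from \L{}ojasiewicz-type inequalities — starting from $\|v\|^2\le C\,|p(v)-p(x_0)|$, and more generally lower bounds for $|p(v)-p(w)|$ in terms of the distance between the orbits $Hv$ and $Hw$ — while Malgrange's preparation/division theorem, equivalently Tougeron's theory of ideals of differentiable functions, converts the formal, stratum-by-stratum solutions into genuine $C^\infty$ functions with the required decay. Whitney's extension theorem for closed subsets of $\bR^k$ then yields the desired $g$.

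I expect this last point — verifying Whitney's conditions for the jet on the semialgebraic set $X$ at points of positive-codimension strata (equivalently, controlling invariant functions that are flat along the fixed locus) — to be the main obstacle: essentially all of the real and semialgebraic geometry enters there, and it is exactly the step for which Schwarz's original proof, and the later proofs of Mather and of Bierstone, require their heaviest machinery.
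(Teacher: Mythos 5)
The paper does not prove this statement at all: Theorem \ref{schwarz} is imported verbatim from Schwarz \cite{schwarz_smooth_1975} as an external input to Example \ref{smoothorbits}, so there is no in-paper argument to compare yours against. Judged on its own terms, your outline is a faithful reconstruction of the architecture of Schwarz's proof --- properness of $p$ via $\|v\|^2=Q(p_1,\dots,p_k)$, the homeomorphism $V/H\cong X=p(V)$ with $X$ closed and semialgebraic, localization by invariant partitions of unity, slice-theorem reduction, the Borel/formal-power-series step at fixed points (using that invariant formal series are formal series in the $p_i$), and then \L{}ojasiewicz inequalities, Malgrange preparation, and Whitney extension for the flat part. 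These are indeed the correct ingredients, and you are right that the verification of Whitney's conditions for the jet on $X$ is where all the real difficulty lives.

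Two concrete issues. First, your induction scheme as stated is not well-founded: you dispose of points with \emph{positive-dimensional} orbit via $\dim N<\dim V$ and then treat $H$-fixed points, but this omits points with finite nontrivial orbit, where the stabilizer $K$ is a proper subgroup with $\dim K=\dim H$ and the slice representation satisfies $\dim N=\dim V$. Schwarz handles this with a double induction, descending also in the group (e.g.\ in the number of connected components of $H$), and without that the argument does not terminate. Second, and more fundamentally, the analytic core --- constructing the Whitney jet on $X$ across the lower-dimensional strata and establishing the Taylor-remainder estimates from the \L{}ojasiewicz bounds --- is named but not carried out; as you yourself note, this is the bulk of Schwarz's paper, so what you have is an accurate roadmap rather than a proof. (Minor points: the averaging projection $C^\infty(V)\to C^\infty(V)^H$ is not needed for surjectivity, only for Mather's splitting; and the inequality $\|v\|^2\le C\,|p(v)-p(x_0)|$ holds only on bounded sets, which suffices after localization but should be said.)
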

	
	Mather \cite[Theorem 1]{mather_differentiable_1977} was able to strengthen Schwarz's Theorem by showing that $p^*$ in Equation (\ref{hilb}) is a split surjection, that is, it admits a right inverse. Using this, one can show as Pflaum does \cite[Remark 4.4.4]{pflaum_analytic_2001}, that charts on $M/G$ arising from maps as in Equation (\ref{hilbertmap}) are compatible with one another. 
	
	One nice consequence of compatibility is that we can give an exact characterization of the smooth functions on $M/G$. Indeed, let $\pi:M\to M/G$ be the quotient map and $U\subset M/G$ an open subset. Then, $f:U\to \bR$ is smooth if, and only if, $f\circ \pi:\pi^{-1}(U)\to \bR$ is smooth. In particular, 
	\begin{equation}\label{quotmap}
		C^\infty(M/G)\cong C^\infty(M)^G.
	\end{equation}
	
\end{egs}

\begin{rem}
	In general, differentiable stratified spaces can still be quite wild in nature. To grapple with these objects, various extra conditions can be imposed on the smooth structure and the topology. Two of the most widely used are the Whitney conditions. In this paper, we will only be discussing the Whitney A condition as it is the most natural one to generalize to the stratified vector bundle context. As was shown by Scarlett, it is also possible to extend Whitney C \cite[Section 3.4]{scarlett_smooth_2023} (which we will not be discussing) to stratified vector bundles as well.
\end{rem}

\begin{defs}\label{whitdef}
	Let $X$ be a differentiable stratified space, $R,S\subset M$ strata so that $S\subset\overline{R}$, and let $x\in S$. Say $(R,S)$ is {\bf Whitney A regular at $x$ } (respectively, {\bf Whitney B regular at $x$}) if for any chart $\phi:U\to \bR^n$ about $x$ condition (A) (respectively, (B)) holds.
	\begin{itemize}
		\item[(A)]  If there exists a sequence $\{y_k\}\in R$ and a subspace $W\subset T_{\phi(x)} \bR^n$ such that in the $\dim(R)$ Grassmannian of $T\bR^n$,
		\bge
		\lim_{k\to\infty} T_{\phi(y_k)}\phi(U\cap R)=W,
		\ene
		then $T_{\phi(x)}\phi(U\cap S)\subset W$.
		\item[(B)] Suppose there exists sequences $\{x_k\}\subset S$ and $\{y_k\}\subset R$ with $x_k\neq y_k$ together with a subspace $W\subset T_{\phi(x)} \bR^n$ such that
		\begin{itemize}
			\item[(i)] the sequence of lines $\ell_k\subset \bR^n$ connecting $\phi(x_k)$ to $\phi(y_k)$ converges to a line $\ell$ in the projectivization of $\bR^n$,
			\item[(ii)] the sequence $T_{\phi(y_k)}\phi(U\cap R)$ converges to $W$ in the $\dim(R)$ Grassmannian of $T\bR^n$,
		\end{itemize}
		then $\ell\subset W$.
	\end{itemize}
	If all pairs of strata $(R,S)$ with $S\subset \overline{R}$ are Whitney A (respectively, Whitney B) regular at all points of $S$, then $X$ is said to be {\bf Whitney A regular} (respectively, {\bf Whitney B regular}). 
\end{defs}

\begin{rem}
	As is noted in Pfalum \cite{pflaum_analytic_2001}, Whitney B implies Whitney A. 
\end{rem}

\begin{thm}[Pflaum-Posthuma-Tang {{\cite[Corollary 5.4]{pflaum_geometry_2014}}}]\label{smthquot}
	If $G\rightrightarrows M$ is a proper Lie groupoid, then $M$ equipped with its Morita type stratification $\mathcal{S}_G(M)$ and $M/G$ with its canonical stratification $\mathcal{S}_G(M/G)$ are both canonically Whitney B stratified spaces. 
\end{thm}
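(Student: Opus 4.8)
The plan is to reduce, via the linearization theorem for proper Lie groupoids, to a single model situation: the orbit type stratification of a compact Lie group acting orthogonally on a Euclidean space. Throughout one uses that the Whitney conditions are local and are tested in charts (Definition \ref{whitdef}), and that the relevant charts were already pinned down — slice charts on $M$ and the Schwarz/Hilbert charts $\widetilde p : U/G \hookrightarrow \bR^k$ on $M/G$ from Example \ref{smoothorbits} — so it suffices to check Whitney B regularity of each pair of strata near an arbitrary point. I also use the elementary fact that, with the trivial stratification on a factor $\bR^d$, a product stratification $\bR^d\times Y$ is Whitney B regular if and only if $Y$ is. Since Whitney B implies Whitney A, this in particular yields more than the rest of the paper needs.

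\emph{Step 1: linearization and reduction to a linear compact action.} Fix $x\in M$, let $\msO_x$ be its orbit and $H=G_x$ its (compact) isotropy, acting linearly on the normal representation $N_x$. By the linearization theorem \cite[Theorem 1]{crainic_linearization_2013}, $G$ restricted to a $G$-invariant neighbourhood of $\msO_x$ is isomorphic as a Lie groupoid to the linear local model built from the action groupoid $H\times N_x\rightrightarrows N_x$. The Morita type of a point depends only on its isotropy group together with its normal representation, hence is invariant under groupoid isomorphism, so this identification carries $\mathcal{S}_G(M)$ near $\msO_x$ to the orbit type stratification of the local model and carries the canonical stratification of $M/G$ near the image of $\msO_x$ to that of $N_x/H$. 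Choosing a slice, a neighbourhood of $x$ in $M$ becomes diffeomorphic to $\bR^d\times W$ for an $H$-invariant neighbourhood $W$ of $0$ in $N_x$, with the stratification the product of the trivial one on $\bR^d$ with $\mathcal{S}_H(W)$, and the corresponding neighbourhood of $M/G$ becomes $W/H$ with its canonical stratification. By the product fact we are reduced to: for a compact Lie group $H$ acting orthogonally on a Euclidean space $V$, both $\mathcal{S}_H(V)$ and the canonical stratification of $V/H$ (realised in $\bR^k$ via a Hilbert map) are Whitney B.

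\emph{Step 2: the linear compact model.} Both statements I would prove by induction on $\dim V$, and we may assume $V^H=0$ (split off the linear summand $V^H$, a trivial product factor, via the product fact), so that $0$ is the only $H$-fixed point. For a stratum $S=V_{(K)}$ and a point $v\in S$ with $v\neq 0$, the slice theorem gives an $H$-equivariant diffeomorphism of a neighbourhood of $H\cdot v$ onto $H\times_{H_v}N'$, where $N'=T_vV/T_v(H\cdot v)$ carries the orthogonal slice representation of $H_v$; the orbit type strata correspond and $\dim N'<\dim V$, so the inductive hypothesis applies at every non-origin point. It remains to handle the pairs incident to the origin stratum $\{0\}\subset V$, respectively to the cone point of $V/H$. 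Here one uses that the $\bR_{>0}$-scaling on $V$ commutes with $H$ and preserves every stratum, so $V\setminus\{0\}\cong S(V)\times\bR_{>0}$ with $S(V)\subset V$ the unit sphere, and $V/H=C\bigl(S(V)/H\bigr)$, where $S(V)/H$ is Whitney B by the induction applied in slices on the compact manifold $S(V)$; the resulting cone structure forces the limiting secant lines at the origin to lie inside the limiting tangent planes, which is exactly Whitney B. For the $V/H$ statement one additionally invokes Theorem \ref{schwarz}: it makes $\widetilde p$ a smooth embedding on each stratum and exhibits $\widetilde p(V/H)$ and its strata as semialgebraic submanifolds of $\bR^k$, so the limiting tangent/secant computation can be carried out there.

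\emph{Step 3: assembling, and the main obstacle.} Cover $M$ by slice charts and $M/G$ by Hilbert charts as above; Steps 1--2 show that every pair of strata is Whitney B regular at every point of each such chart, and since Whitney B regularity at a point does not depend on the chosen chart, $(M,\mathcal{S}_G(M))$ and $(M/G,\mathcal{S}_G(M/G))$ are Whitney B stratified spaces. The main obstacle is the base case of Step 2: verifying, for a linear action of a compact group, the secant-line incidence in the Whitney B condition near the fixed subspace (the cone-point estimate). One can shortcut this by quoting the classical fact that orbit type strata of proper Lie group actions are Whitney (b)-regular, going back to Mather \cite{mather_stratifications_1973} (and Bierstone), but carrying out the slice induction keeps the argument self-contained and parallels the way $\mathcal{S}_G(M)$ was constructed in the first place.
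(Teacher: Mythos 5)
First, a point of order: the paper does not prove Theorem \ref{smthquot} at all --- it is imported wholesale from Pflaum--Posthuma--Tang, so there is no internal argument to compare yours against; the honest move for the paper's purposes is the citation. Judged on its own terms, your reduction is the standard one, and the half of the statement concerning $\mathcal{S}_G(M)$ does go through: linearization plus the product fact reduces everything to a compact group $H$ acting orthogonally on $V$ with $V^H=0$, the slice induction handles all points away from the origin, and at the origin every stratum $R$ is invariant under scalar dilation, so the secant line $\bR y_k$ is literally contained in $T_{y_k}R$ and condition (B) passes to the limit.

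The gap is in the corresponding cone-point estimate for $V/H$, which you correctly flag as the main obstacle but then dispatch with \enquote{the resulting cone structure forces the limiting secant lines to lie inside the limiting tangent planes.} That inference is only valid when the strata are genuine linear cones, as they are in $V$ itself. The chart on $V/H$ is the Hilbert embedding $\widetilde{p}$ of Example \ref{smoothorbits}, and when $V^H=0$ the generators $p_1,\dots,p_k$ are homogeneous of degrees $d_i\geq 2$, so $\widetilde{p}(V/H)\subset\bR^k$ is invariant only under the weighted dilations $z\mapsto(t^{d_1}z_1,\dots,t^{d_k}z_k)$. The associated Euler-type field $\sum_i d_i z_i\,\partial/\partial z_i$ is tangent to each stratum, but it is proportional to the radial field only when all the $d_i$ coincide; for a general sequence $y_j=\widetilde{p}([v_j])\to 0$ whose directions $v_j/\|v_j\|$ wander on the unit sphere, components of different homogeneity can survive in the limiting secant direction, and the claim $\ell\subset W$ no longer follows from tangency of the Euler field. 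Closing this requires a genuine additional input: this is Bierstone's theorem on Whitney regularity of orbit space stratifications (proved via the semialgebraic structure of $\widetilde{p}(V/H)$ and canonical Whitney stratifications of semialgebraic sets), not Mather's notes, which do not treat orbit spaces; and Theorem \ref{schwarz} only gives surjectivity on smooth invariant functions, not a tangent/secant estimate. So either cite Bierstone (or Pflaum--Posthuma--Tang, as the paper does) for the quotient half, or supply the semialgebraic argument; as written, your self-contained induction proves the statement for $M$ but not for $M/G$.
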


\subsection{Stratified Vector Bundles}
We are now ready to give the central definition of this paper. As was stated at the beginning of this section, a stratified vector bundle can be understood to be a stratified space where the strata are the total spaces of vector bundles. 
\begin{defs}\label{svb}
	A {\bf stratified vector bundle} consists of two stratified spaces $(A,\Sigma_A)$ and $(X,\Sigma_X)$ together with a stratified morphism $p:A\to X$ which satisfy the following axioms.
	\begin{itemize}
		\item[(i)] For each $S\in\Sigma_X$, $A|_S:=p^{-1}(S)\in \Sigma_A$.
		\item[(ii)] For each $S\in\Sigma_X$, $p:A|_S\to S$ is a smooth vector bundle.
		\item[(iii)] The scalar multiplication map $\mu:\bR\times A\to A$ is a stratified morphism.
	\end{itemize}
	We will write $p:A\to X$ for a stratified vector bundle if the partitions on each space are understood from context. Call $p:A\to X$ {\bf differentiable} if $A$ and $X$ both have smooth structures, $p:A\to X$ is a smooth map, and scalar multiplication $\mu:\bR\times A\to A$ is smooth. 
\end{defs}

\begin{rem}
	Other objects called stratified vector bundles have been studied in the past. Baues and Ferrario \cite{baues_k-theory_2003} gave a notion of a stratified vector bundle making use of \enquote{$\mathbf{V}$-bundles}, which notably differs from this paper in that the total space is not demanded to be stratified. Two other authors, Kucharz and  Kurdyka \cite{kucharz_stratified-algebraic_2018} also studied objects called \enquote{algebraic-stratified vector bundles}, which thankfully are a special case of the theory we are outlining here, but in an algebraic context.
	
	As was noted earlier, contemporaneously with this paper, Scarlett defined \enquote{smooth stratified vector bundles} \cite[Definition 3.4]{scarlett_smooth_2023} in a way that is in the spirit of the definition we are using. However, Scarlett's definition is much stronger than the notion of a differentiable stratified vector bundle as in Definition \ref{svb} in that not only are $A$ and $X$ differentiable and the bundle map $p$ is smooth, but we also obtain  that the scalar multiplication $\mu$ on $A$ is locally the restriction of the scalar multiplication of a smooth vector bundle over some Euclidean space via the charts on $X$ and $A$. This definition is very well behaved and one obtains lots of useful structures like generalizations of frame bundles in this fashion. However, for the purposes of this paper, the definition is probably too strong for some of the examples we will be examining later on. 
\end{rem}

\begin{egs}\label{trivbund}
	Let $(X,\Sigma)$ be a stratified space. Consider the trivial rank $n$ vector bundle $A=X\times \bR^n$ over $X$. Letting $p=pr_1$ be the projection onto the first factor, we observe that 
	\bge
	\Sigma_A:=\{S\times \bR^n  \ | \ S\in \Sigma\}
	\ene
	is a stratification of $A$ making $p:A\to X$ into a stratified vector bundle. 
	
	Less canonically, but more generally, consider now any vector bundle $p:A\to X$ over $X$. We can also make $A$ into a stratified space. Indeed, a similar partition
	\bge
	\Sigma_A:=\{p^{-1}(S) \ | \ S\in\Sigma\}
	\ene
	almost defines a stratification of $A$, except none of the $p^{-1}(S)$ come equipped with a smooth structure. However, there is a general fact that if we are given a smooth manifold $M$ and a continuous vector bundle $\pi:E\to M$, then there exists a smooth structure on $E$ making $\pi:E\to M$ a smooth vector bundle \cite[Theorem 3.5]{hirsch_differential_1997}. In fact, this smooth structure is unique up to a smooth isomorphism. In particular, we can (non-canonically) equip each piece in $S\in\Sigma_A$ with a smooth structure making the restriction $p:p^{-1}(S)\to S$ into a smooth vector bundle. 
\end{egs}

\begin{egs}
	Suppose $(X,\Sigma)$ is a differentiable stratified space. For any chart $\phi:U\to \bR^n$, we may define
	\bge
	TU:=\bigcup_{S\in\Sigma} T(S\cap U).
	\ene
	Furthermore, since $\phi:U\cap S\to \phi(U\cap S)$ is assumed to be smooth, we can define $T\phi:TU\to T\bR^n$ by
	\bge
	T\phi|_{U\cap S}=T(\phi|_{U\cap S}).
	\ene
	Using this, define
	\bge
	TX:=\bigcup_{S\in \Sigma} TS
	\ene
	and equip $TX$ with the coarsest topology so that 
	\begin{itemize}
		\item The canonical projection $\pi:TX\to X$ is continuous.
		\item Differentials of all chart maps $T\phi:TU\to T\bR^n$ are continuous.
	\end{itemize}
	
	Observe that the natural scalar multiplication
	\bge
	\mu:\bR\times TX\to TX
	\ene
	is automatically continuous since in a chart $\phi:U\to\bR^n$, $\mu$ can be identified with the restriction of the scalar multiplication on $T\bR^n$. $TM$ also clearly has a natural partition into locally closed subsets which are topological manifolds and carry a canonical smooth structure. However, it's not necessarily the case that $\pi:TX\to X$ is a stratified vector bundle since the partition on $TX$ need not satisfy the frontier condition. This is where the Whitney A condition comes into play.
	
	\begin{thm}[Pflaum {{\cite[Theorem 2.1.2]{pflaum_analytic_2001}}}]
		If $X$ is Whitney A, then $\pi:TX\to X$ is a stratified vector bundle. Furthermore, $TX$ is a (weak) differentiable stratified space. 
	\end{thm}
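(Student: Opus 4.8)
\section*{Proof proposal}

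The plan is to take $\Sigma_{TX}:=\{TS \ | \ S\in\Sigma\}$ as the candidate stratification of $TX$ and to check, in turn, the axioms of Definitions \ref{strdef} and \ref{svb}. First I would dispose of the routine points. One checks directly that $\pi^{-1}(S)=TS$ for every $S\in\Sigma$, so $\Sigma_{TX}$ is a genuine partition of $TX$; it is locally finite because $\pi$ is continuous and $\Sigma$ is locally finite, and each $TS$ is connected, being a vector bundle over the connected base $S$. Each $TS$ is locally closed in $TX$: writing $S=\overline{S}\cap O$ with $O\subset X$ open, continuity of $\pi$ gives $\overline{TS}\subset\pi^{-1}(\overline{S})$, hence $\overline{TS}\cap\pi^{-1}(O)=\pi^{-1}(S)=TS$. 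Each $TS$ carries a canonical smooth structure as the tangent bundle of the manifold $S$, and a direct comparison shows that the subspace topology it inherits from $TX$ agrees with its manifold topology (both are squeezed between the manifold topology and the topology generated by $\pi|_{TS}$ and $T\phi|_{T(S\cap U)}$ for a chart $\phi$ about a given point). This already yields axioms (i) and (ii) of Definition \ref{svb}, since $\pi\colon\pi^{-1}(S)=TS\to S$ is by construction a smooth vector bundle; axiom (iii) holds because $\mu$ is continuous (noted in the excerpt) and restricts on each stratum to the scalar multiplication of the vector bundle $TS$, which is smooth.

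The mathematical content is the frontier condition for $\Sigma_{TX}$, and this is where Whitney A enters. Suppose $TS\cap\overline{TR}\neq\emptyset$; applying $\pi$ and using $\pi(\overline{TR})\subset\overline{\pi(TR)}=\overline{R}$ gives $S\cap\overline{R}\neq\emptyset$, hence $S\subset\overline{R}$ by the frontier condition on $X$. Fix $v\in T_zS$ with $z\in S$ and a chart $\phi\colon U\to\bR^n$ about $z$; it suffices to produce a sequence in $TR$ converging to $v$. Choose $z_k\in R$ with $z_k\to z$ (possible since $z\in S\subset\overline{R}$); for $k$ large $z_k\in U$. Since $R$ is a connected topological manifold its dimension $d$ is constant, so the planes $T_{\phi(z_k)}\phi(U\cap R)$ lie in the fixed compact $d$-Grassmannian of $T\bR^n$, and after passing to a subsequence converge to some $d$-plane $W\subset T_{\phi(z)}\bR^n$. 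Since $X$ is Whitney A and $z\in S\subset\overline{R}$, Definition \ref{whitdef}(A) applied to the sequence $z_k$ and the subspace $W$ in the chart $\phi$ gives $T_{\phi(z)}\phi(U\cap S)\subset W$, so in particular $T\phi(v)\in W$. Because $W$ is the limit of the planes $T_{\phi(z_k)}\phi(U\cap R)$, there exist $w_k\in T_{\phi(z_k)}\phi(U\cap R)$ with $w_k\to T\phi(v)$ (take $w_k$ to be the orthogonal projection of $T\phi(v)$ onto the $k$-th plane; these projections converge to the projection onto $W$). Set $v_k:=(T\phi|_{T(U\cap R)})^{-1}(w_k)\in TR$. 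I claim $v_k\to v$ in $TX$: by the definition of the topology on $TX$ it is enough to check $\pi(v_k)\to\pi(v)$ and $T\psi(v_k)\to T\psi(v)$ for every chart $\psi$ about $z$. The first holds since $\pi(v_k)=z_k\to z=\pi(v)$. For the second, compatibility of $\phi$ and $\psi$ (Definition \ref{smthdef}) furnishes, on a neighbourhood of $z$, a diffeomorphism $H$ of open subsets of Euclidean space with $T\psi=TH\circ T\phi$, whence $T\psi(v_k)=TH(w_k)\to TH(T\phi(v))=T\psi(v)$. Thus $v\in\overline{TR}$, and as $v\in T_zS$ was arbitrary, $TS\subset\overline{TR}$.

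For the second assertion: for each chart $\phi\colon U\to\bR^n$ of $X$ the differential $T\phi\colon TU=\pi^{-1}(U)\to T\bR^n\cong\bR^{2n}$ is a topological embedding whose restriction to each stratum $T(S\cap U)$ is a smooth embedding onto the submanifold $T(\phi(S\cap U))$; these charts cover $TX$, and two of them, $T\phi$ and $T\psi$, are compatible because $\phi$ and $\psi$ are intertwined near any common point by a transition diffeomorphism $H$, so $T\phi$ and $T\psi$ are intertwined by $TH$. Hence the $T\phi$'s form a smooth atlas on $TX$. The remaining point-set conditions for $TX$ are inherited from $X$: Hausdorffness follows by separating base points with $\pi$ and separating two vectors in a common fibre with any $T\phi$ (which is injective on each $TS$); second countability follows by covering $X$ with countably many chart domains $U_j$ (Lindel\"of), noting that each $\pi^{-1}(U_j)=TU_j$ is open in $TX$ and, via $T\phi_j$, homeomorphic to a subspace of $\bR^{2n}$; paracompactness then follows from second countability together with local metrizability and Hausdorffness. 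The qualifier \enquote{weak} records the one genuine defect: the chart image $T\phi(TU)\subset\bR^{2n}$ need not be locally closed — over a point of a low-dimensional stratum $S$ the closure of $T\phi(TU)$ typically acquires extra tangent directions coming from higher strata $R$ with $S\subset\overline{R}$ — so $T\phi$ is a topological embedding but not an embedding onto a locally closed subspace in the strict sense of Definition \ref{smthdef}(1).

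The main obstacle is the frontier-condition step of the second paragraph: extracting the limiting plane $W$ via compactness of the Grassmannian, feeding it into Whitney A in the correct direction, and — the genuinely delicate point — verifying that the lifted sequence $v_k$ converges in the nonstandard weak topology that $TX$ carries, which is exactly what forces one to pass through the compatibility of charts. The point-set verifications for $TX$ are standard but fiddly, and the parenthetical \enquote{weak} is precisely there to absorb the failure of local closedness of the charts $T\phi$.
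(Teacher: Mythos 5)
The paper does not prove this statement --- it is quoted verbatim from Pflaum \cite[Theorem 2.1.2]{pflaum_analytic_2001}, with the surrounding example only setting up the topology on $TX$ and observing that the frontier condition is the issue. Your reconstruction is correct and follows the standard (Pflaum-style) route: the only substantive point is the frontier condition, which you handle properly by extracting a limiting plane $W$ in the compact Grassmannian, invoking Whitney A to get $T\phi(v)\in W$, lifting via orthogonal projections onto the approximating planes, and checking convergence of the lift in the initial topology on $TX$ through chart compatibility.
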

	
	Thus, every Whitney A stratified space comes equipped with a canonical stratified vector bundle. Namely, its stratified tangent bundle. 
\end{egs}

\begin{defs}
	Let $p:A\to X$ and $q:B\to Y$ be two {\bf stratified vector bundles}. A morphism of stratified vector bundles consists of two stratified maps $F:X\to Y$ and $\phi:A\to B$ such that 
	\begin{itemize}
		\item[(i)] The diagram commutes
		\bge
		\begin{tikzcd}
			A\arrow[r,"\phi"]\arrow[d] & B\arrow[d]\\
			X\arrow[r,"F"] & Y
		\end{tikzcd}
		\ene
		\item[(ii)] For any strata $S\subset X$ and  $R\subset Y$ with $F(S)\subset R$, the following is a morphism of smooth vector bundles
		\bge
		\begin{tikzcd}
			A|_S\arrow[r,"\phi|_S"]\arrow[d] & B|_R\arrow[d]\\
			S\arrow[r,"F|_S"] & R
		\end{tikzcd}
		\ene
	\end{itemize}
\end{defs}

It's straightforward to show that stratified vector bundles together with morphisms of stratified vector bundles form a category. 

\begin{egs}
	Let $X$ and $Y$ be two Whitney A stratified spaces and $F:X\to Y$ a smooth map. Then, on every stratum $S\subset X$ we can find a stratum $R\subset Y$ with $F(S)\subset R$ and $F|_S:S\to R$ is a smooth map between smooth manifolds. Hence, we may define
	\bge
	T(F|_S):TS\to TR.
	\ene
	This allows us to formally define $TF:TX\to TY$ by
	\bge
	(TF)|_{TS}=T(F|_S)
	\ene
	for all strata $S\subset X$. The smoothness of $F$ together with the initial topology on $TY$ shows that $TF$ is continuous and stratum preserving. Hence, we have a stratified vector bundle morphism
	\bge
	\begin{tikzcd}
		TX\arrow[r,"TF"]\arrow[d] & TY\arrow[d]\\
		X\arrow[r,"F"] & Y
	\end{tikzcd}
	\ene
\end{egs}

\begin{defs}
	Let $p:A\to X$ be a stratified vector bundle. A {\bf section} is a continuous map $s:X\to A$ such that $p\circ s=\id_M$. Write $\Gamma(X,A)$ for the set of all sections.
\end{defs}

\begin{rem}
	It's clear that by restricting to open neighbourhoods on the base of a stratified vector bundle $p:A\to X$, we can define a sheaf:
	\bge
	\text{open }U\subset X \ \mapsto \Gamma(U,A|_U).
	\ene
\end{rem}

\subsection{Stratified Regular Monoid Actions}
One approach to studying smooth vector bundles due to Grabowski-Rotkiewicz \cite{grabowski_higher_2009} and extended by Bursztyn-Cabrera-del Hoyo \cite{bursztyn_vector_2016} to VB-groupoids and VB-algebroids is to use actions of the multiplicative monoid $(\bR,\cdot)$. In more detail, suppose we have a smooth vector bundle $\pi:E\to M$. Then, we can define a map
\bge
h:\bR\times E\to E;\quad (t,e)\mapsto t\cdot e,
\ene
where multiplication is given by scalar multiplication. This defines a smooth action of the monoid $(\bR,\cdot)$ on $E$. Writing $h_t(\cdot):=h(t,\cdot)$, this means that $h_1=\id$ and $h_t\circ h_s=h_{ts}$ for all $s,t\in \bR$.  It turns out, for suitable monoid actions, we can go the other way around and obtain a vector bundle.

Indeed, given a smooth monoid action $h:\bR\times E\to E$, write $h_t(\cdot):=h(t,\cdot)$. Then, we have 
\bge
h_0\circ h_0=h_0.
\ene
Hence, $h_0(E)$ is a closed embedded submanifold of $E$ \cite[Theorem 1.13]{kolar_natural_1993}. Therefore, we have a smooth map $h_0:E\to h_0(E)$. 

\begin{defs}\label{smthmd}
	Let $E$ be a smooth manifold and $h:\bR\times E\to E$ a smooth action by the monoid $\bR$ on $E$. Say the action is {\bf regular} if for all $e\in E$,
	\bge
	\frac{d}{dt}\bigg|_{t=0}h_t(e)=0
	\ene
	if and only if $e=h_0(e)$.
\end{defs}

\begin{thm}[Grabowski-Rotkiewicz {{\cite[Theorem 2.1]{grabowski_higher_2009}}}]\label{grab}
	The monoid action of $(\bR,\cdot)$ on the total space of a vector bundle $\pi:E\to M$ is regular. Conversely, if $h:\bR\times E\to E$ is a regular monoid action such that $h:E\to h_0(E)$ is constant rank, then $h_0:E\to h_0(E)$ is canonically a smooth vector bundle  such that the scalar multiplication is given by $h$.
\end{thm}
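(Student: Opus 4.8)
The plan is to handle the two directions separately. The forward direction is immediate in a local trivialisation $E|_U\cong U\times\bR^k$: there scalar multiplication is $h_t(x,v)=(x,tv)$, so $h_0$ is the projection onto the zero section and $\frac{d}{dt}\big|_{t=0}h_t(x,v)$ is the vertical vector $v$, which vanishes exactly when $v=0$, i.e.\ exactly when $(x,v)$ is fixed by $h_0$.

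For the converse, write $M:=h_0(E)$; as recalled above, $h_0$ being idempotent, $M$ is a closed embedded submanifold and $h_0|_M=\id_M$, so the constant-rank map $h_0:E\to M$ has rank $\dim M$ and is a surjective submersion. Consequently each fibre $E_x:=h_0^{-1}(x)$ is a closed embedded submanifold of a fixed dimension $k$, the distribution $\nu:=\ker dh_0$ along $M$ is a rank-$k$ vector bundle over $M$, and since $h_0\circ h_t=h_0$ every $h_t$ preserves every fibre. The central object is the map $\Phi:E\to TE$, $\Phi(e):=\frac{d}{dt}\big|_{t=0}h_t(e)$; this is smooth, differentiating $h_0\circ h_t=h_0$ shows $\Phi(e)\in\nu_{h_0(e)}$, and the chain rule applied to $h_t\circ h_s=h_{ts}$ gives the homogeneity identity $\Phi(h_s e)=s\,\Phi(e)$, the scalar acting linearly in the vector space $\nu_{h_0(e)}$. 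Thus on each fibre $\Phi$ restricts to a smooth map $\Phi_x:E_x\to\nu_x$ intertwining $h$ with ordinary scalar multiplication.

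Next I would prove each $\Phi_x$ is a diffeomorphism, which lets us transport the linear structure of $\nu_x$ to $E_x$. Restricting attention to one fibre, assume $M=\{x\}$ is a point; then $d(\Phi_x)_x=\frac{d}{dt}\big|_{t=0}d(h_t)_x$, and the family $L(t):=d(h_t)_x\in\End(T_xE_x)$ is multiplicative with $L(1)=\id$, $L(0)=0$. Writing $L(e^\sigma)=e^{\sigma B}$ for $t>0$, the requirements that $L(t)\to 0$ as $t\to 0^+$ and that $h$ be smooth in $t$ at $t=0$ force $B$ to be semisimple with positive integer eigenvalues, so in suitable coordinates $d(h_t)_x=\bigoplus_i t^i\,\id_{W_i}$ on a decomposition $T_xE_x=\bigoplus_{i\ge1}W_i$; a short argument with the homogeneity identity further kills all resonant higher-order terms, so that $h_t$ is genuinely linear in these coordinates and $d(\Phi_x)_x$ is the projection onto $W_1$ along $\bigoplus_{i\ge2}W_i$. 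At this point the regularity hypothesis is exactly the statement that $\bigoplus_{i\ge2}W_i=0$, i.e.\ that $d(\Phi_x)_x$ is an isomorphism. I expect this identification --- really the fact that smoothness of $h$ forbids the resonant terms that the integer weights would otherwise allow --- to be the main obstacle; the rest is soft. Given that $\Phi_x$ is a local diffeomorphism at the origin of the fibre, the homogeneity identity propagates it: for any $e$ the point $h_s(e)$ lies in that neighbourhood once $s$ is small, and $\Phi_x=s^{-1}\,\Phi_x\circ h_s$ near $e$, so $\Phi_x$ is a local diffeomorphism everywhere; the same contraction trick shows it is a bijection, hence a diffeomorphism.

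Finally I would assemble the vector bundle. Over an open $U\subseteq M$ small enough to trivialise $\nu$, set $\Psi:=(h_0,\Phi):h_0^{-1}(U)\to U\times\bR^k$. It is smooth, fibrewise equal to $\Phi_x$ hence bijective, $\bR^\times$-equivariant ($\Psi\circ h_t=(\id\times t\cdot)\circ\Psi$), and along $U$ its differential is block-triangular with blocks $dh_0$ and $d(\Phi_x)_x$, hence invertible; so once more the equivariance-and-contraction argument upgrades it to a diffeomorphism. These $\Psi$'s serve as vector bundle charts: their transition maps are smooth, fibre-preserving, fix the zero section and commute with scalar multiplication, and a scalar-equivariant diffeomorphism of $\bR^k$ fixing $0$ equals its derivative at $0$, hence is linear. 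This presents $h_0:E\to M$ as a smooth vector bundle with scalar multiplication $h$, and since $\Phi$ and therefore every $\Psi$ is built from $h$ alone, the structure is canonical.
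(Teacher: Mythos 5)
Your proposal follows essentially the same route as the paper: both construct the canonical map $e\mapsto \frac{d}{dt}\big|_{t=0}h_t(e)$ into the vertical bundle $\ker(dh_0)|_{h_0(E)}$, use regularity to conclude it is a fibre-preserving diffeomorphism, and transport the linear structure back to $E$ along it. The only difference is one of detail: the paper simply asserts that regularity guarantees this map is a diffeomorphism (deferring to Grabowski--Rotkiewicz), whereas you supply the weight-decomposition and homogeneity argument behind that assertion.
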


\begin{proof}
	The first part of the theorem can be checked by passing to local coordinates. For the latter part, define the vertical bundle
	\bge
	V_hE:=\ker( dh_0)|_{h_0(E)}.
	\ene
	Then, we get a canonical map
	\bge
	\phi:E\to V_h E;\quad e\mapsto \frac{d}{dt}\bigg|_{t=0}h_t(e).
	\ene
	The assumption that $h$ is regular guarantees that $\phi$ is a diffeomorphism. Furthermore, the map is fibre-preserving in the sense that the diagram commutes
	\bge
	\begin{tikzcd}
		E\arrow[rr,"\phi"]\arrow[dr,swap,"h_0"] && V_hE\arrow[dl,"\pi"]\\
		&h_0(E) &
	\end{tikzcd}
	\ene
	where $\pi:V_hE\to h_0(E)$ is the vector bundle projection. Hence, equip $E$ with the unique vector bundle structure making $\phi$ into an isomorphism of vector bundles. 
\end{proof}

Now with that background out of the way, let us now introduce the concept of a stratified regular monoid action. 

\begin{defs}
	Let $(A,\Sigma_A)$ be a stratified space. A monoid action $\mu:\bR\times A\to A$ is called {\bf stratified regular} if 
	\begin{itemize}
		\item[(i)] $\mu$ is a stratified morphism (see Definition \ref{stratmorph})  with respect to the stratification 
		\bge
		\{\bR\times S \ | \ S\in \Sigma_A\}
		\ene
		on $\bR\times A$.
		\item[(ii)] For each $B\in\Sigma_A$, the restriction $\mu|_B:\bR\times B\to B$ is regular in the sense of Definition \ref{smthmd}.
	\end{itemize}
	If $A$ has a smooth structure and $\mu$ is a smooth map, say the action is {\bf smooth stratified regular}. 
\end{defs}

\begin{egs}
	Let $p:A\to X$ be a (differentiable) stratified vector bundle. Then, by definition, we have a continuous scalar multiplication map
	\bge
	\mu:\bR\times A\to A.
	\ene
	The map $\mu$ is naturally stratified since the strata of $\bR\times A$ are of the form $\bR\times A|_S$, and the restriction
	\bge
	\mu|_S:\bR\times A|_S\to A|_S
	\ene
	is simply the scalar multiplication of $A|_S\to S$, which is clearly smooth. Furthermore, since over each stratum, $\mu$ is indeed the scalar multiplication of a smooth vector bundle, it follows that $\mu$ is regular.
\end{egs}

\begin{thm}\label{monthm}
	Every (smooth) stratified regular monoid action uniquely determines a (differentiable) stratified vector bundle.
\end{thm}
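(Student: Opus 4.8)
The plan is to construct the base $X$ and the bundle projection directly from the monoid action, applying Grabowski--Rotkiewicz (Theorem \ref{grab}) one stratum at a time and gluing the pieces. Write $\mu_0:=\mu(0,\cdot)\colon A\to A$; since $\mu_0\circ\mu_0=\mu_{0\cdot 0}=\mu_0$, the map $\mu_0$ is a continuous retraction of $A$, so $X:=\mu_0(A)=\{a\in A:\mu_0(a)=a\}$ is closed in $A$, being the equalizer of $\mu_0$ and $\id$ in a Hausdorff space. For each stratum $B\in\Sigma_A$ the stratified-morphism hypothesis on $\mu$ forces $\mu(\bR\times B)\subseteq B$ --- the target stratum must contain $\mu(1,b)=b$ --- so $\mu|_B$ is a regular monoid action on the smooth manifold $B$; since $\mu_0|_B$ is an idempotent self-map, the constant-rank hypothesis of Theorem \ref{grab} is automatic (as in the proof of that theorem), and we get that $\mu_0|_B\colon B\to \mu_0(B)$ is canonically a smooth vector bundle. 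I then set $p:=\mu_0\colon A\to X$ and $\Sigma_X:=\{\mu_0(B):B\in\Sigma_A\}$.

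I would next verify that $(X,\Sigma_X)$ is a stratified space. Hausdorffness, second countability and paracompactness descend to the closed subspace $X$; disjointness and local finiteness of $\Sigma_X$ follow from $\mu_0(B)\subseteq B$ and the corresponding facts for $\Sigma_A$; each $\mu_0(B)$ is connected as a continuous image of $B$, is locally closed in $A$ as a closed subset of the locally closed $B$, and is a smooth manifold as the base of $\mu_0|_B$, with manifold topology agreeing with the subspace topology since $\mu_0(B)$ is a closed embedded submanifold of $B$. The one genuinely new point is the frontier condition, and for this I would prove $\overline{\mu_0(B')}=\overline{B'}\cap X$ for every $B'\in\Sigma_A$: ``$\subseteq$'' holds because $\mu_0(B')\subseteq B'$ and $X$ is closed, and ``$\supseteq$'' because $\mu_0$ retracts onto $X$, so $\overline{B'}\cap X=\mu_0(\overline{B'}\cap X)\subseteq\mu_0(\overline{B'})\subseteq\overline{\mu_0(B')}$. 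Granting this, if $\mu_0(B)\cap\overline{\mu_0(B')}\neq\emptyset$ then $B\cap\overline{B'}\neq\emptyset$, so $B\subseteq\overline{B'}$ by the frontier condition for $\Sigma_A$, hence $\mu_0(B)\subseteq\overline{B'}\cap X=\overline{\mu_0(B')}$.

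Checking Definition \ref{svb} is then quick: $p^{-1}(\mu_0(B))=B$, because a point $a$ with $\mu_0(a)\in\mu_0(B)$ lies in the same stratum as $\mu_0(a)$, which lies in $B$; hence $p^{-1}(S)\in\Sigma_A$ for $S\in\Sigma_X$, the restriction $p|_B=\mu_0|_B$ is a smooth vector bundle by the step above, and $\mu$ is a stratified morphism by assumption. In the smooth case I would topologize and smooth $X$ by declaring each restriction $\phi|_{U\cap X}$ of a chart $\phi\colon U\to\bR^n$ of $A$ to be a chart of $X$ --- this is a chart because $X$ is closed in $A$ and $\mu_0(B)\cap U$ is open in the embedded submanifold $\mu_0(B)\subseteq B$ --- with compatibility of the restricted charts inherited verbatim from that of the charts of $A$. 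Smoothness of $p$ then follows from two observations: $\mu_0\colon A\to A$ is smooth (it is $\mu$ precomposed with the smooth slice inclusion $a\mapsto(0,a)$), and any smooth $f\colon X\to\bR$ extends, along such a chart, to a smooth function $\tilde f$ on an open subset of $A$, so $f\circ p$ agrees locally with $\tilde f\circ\mu_0$ and is smooth. Uniqueness is a formality: for any stratified vector bundle $p'\colon A\to X'$ with scalar multiplication $\mu$, its zero section $z'\colon X'\to A$ satisfies $z'\circ p'=\mu_0$ and $p'\circ z'=\id$, identifying $X'$ homeomorphically --- and, stratum by stratum via the uniqueness in Theorem \ref{grab}, diffeomorphically --- with $X=\mathrm{Fix}(\mu_0)$ compatibly with the projections.

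The step I expect to be the real obstacle is the smooth-structure bookkeeping in the differentiable case: confirming that the restricted charts assemble into a genuine maximal atlas, that $p$ is smooth in Pflaum's functional sense (not merely fibrewise), and that the resulting smooth structure on $X$ is the only one making $p$ differentiable --- all of which must contend with the fact that smooth functions on differentiable stratified spaces need not extend globally and that smoothness is tested against every chart. By contrast the topological heart of the argument, the identity $\overline{\mu_0(B')}=\overline{B'}\cap X$ and the frontier condition it yields, is clean.
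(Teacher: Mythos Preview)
Your proposal is correct and follows essentially the same approach as the paper: build the base as $X=\mu_0(A)$, show it is a (differentiable) stratified space with strata $\mu_0(B)$ via the frontier identity $\overline{\mu_0(B')}=\overline{B'}\cap X$, and then invoke Grabowski--Rotkiewicz stratum-by-stratum. The only organizational difference is that the paper first isolates a standalone lemma---that any (smooth) idempotent stratified self-map $f\colon X\to X$ with $f(S)\subset S$ makes $f(X)$ into a (differentiable) stratified space and $f$ into a (smooth) stratified morphism---and then applies it to $\mu_0$, whereas you prove the same facts inline; your frontier argument and your chart-restriction construction for the smooth structure are exactly the content of that lemma.
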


Before we can prove this theorem, we first need characterization of stratified projections.

\begin{lem}
	Let $(X,\Sigma_X)$ be a (differentiable) stratified space and $f:X\to X$ a (smooth) stratified map such that $f\circ f=f$ and so that $f(S)\subset S$ for all $S\in\Sigma_X$. Then, $Y:=f(X)$ together with the partition
	\bge
	\Sigma_Y:=\{S\cap Y \ | \ S\in\Sigma_X\}
	\ene
	is canonically a (differentiable) stratified space and $f:X\to Y$ is a (smooth) stratified morphism. 
\end{lem}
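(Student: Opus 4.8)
The plan is to check the three axioms of Definition \ref{strdef} for the pair $(Y, \Sigma_Y)$, then verify the morphism claim, first in the topological/stratified setting and then noting that the smooth case follows by carrying smooth structures along. Since $f\circ f = f$ and $f(S)\subset S$, the restriction $f|_S\colon S\to S$ is an idempotent smooth self-map of the manifold $S$, so by \cite[Theorem 1.13]{kolar_natural_1993} (cited just above) $f(S) = S\cap Y$ is a closed embedded submanifold of $S$; this hands us axioms (i) and (ii) of Definition \ref{strdef} immediately, once we also observe that $S\cap Y$ need not be connected but we may refine $\Sigma_Y$ by passing to connected components (or the statement should be read up to this harmless refinement). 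Local finiteness of $\Sigma_Y$ is inherited from that of $\Sigma_X$, and $S\cap Y$ is locally closed in $Y$ because $S$ is locally closed in $X$ and $Y$ carries the subspace topology; Hausdorff, second countable, and paracompact all pass to the subspace $Y\subset X$.

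The one genuinely nontrivial point is the frontier condition (iii). Suppose $S\cap Y$ and $R\cap Y$ are pieces of $\Sigma_Y$ with $(S\cap Y)\cap \overline{R\cap Y}^{\,Y}\neq\emptyset$. Taking closures in $X$ and using $\overline{R\cap Y}^{\,Y}\subset \overline{R}^{\,X}$ gives $S\cap\overline{R}^{\,X}\neq\emptyset$, so the frontier condition for $\Sigma_X$ yields $S\subset\overline{R}^{\,X}$. I now need to upgrade this to $S\cap Y\subset \overline{R\cap Y}^{\,Y}$. The idea is to use continuity of $f$: if $x\in S\cap Y$, pick a net (or sequence, if one prefers to work in charts) $r_\alpha\in R$ with $r_\alpha\to x$; then $f(r_\alpha)\in R\cap Y$ by $f(R)\subset R$, and $f(r_\alpha)\to f(x) = x$ since $x\in Y$ means $f(x)=x$. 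Hence $x\in\overline{R\cap Y}^{\,X}$, and since $x\in Y$ this says $x\in\overline{R\cap Y}^{\,Y}$. Thus $S\cap Y\subset\overline{R\cap Y}^{\,Y}$, as required. (If $\Sigma_Y$ is refined into connected components, one then restricts attention to the relevant component of $R\cap Y$ whose closure meets the given component of $S\cap Y$; this works because the components of a locally closed subspace are again locally closed and the net argument localizes.)

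For the morphism claim: $f\colon X\to Y$ is continuous as a corestriction of the continuous map $f\colon X\to X$, it sends $S\in\Sigma_X$ into $S\cap Y\in\Sigma_Y$ by hypothesis, and $f|_S\colon S\to S\cap Y$ is smooth because it is the corestriction of the smooth map $f|_S\colon S\to S$ onto the embedded submanifold $S\cap Y$. So $f\colon X\to Y$ satisfies Definition \ref{stratmorph}. In the differentiable case, each piece $S\cap Y$ already received a canonical smooth structure above (as an embedded submanifold of the smooth manifold $S$), and one equips $Y$ with the smooth structure obtained by restricting charts of $X$: given a chart $\psi\colon U\to\bR^n$ of $X$, the composite $\psi|_{U\cap Y}\colon U\cap Y\to\bR^n$ is a chart of $Y$ since $\psi(S\cap U\cap Y) = \psi(S\cap U)\cap\psi(U\cap Y)$ is an embedded submanifold of the embedded submanifold $\psi(S\cap U)$; compatibility of these restricted charts is inherited from compatibility of the originals (restrict the diffeomorphism $H$ of Definition \ref{smthdef}(2)). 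Smoothness of $f\colon X\to Y$ then follows from Definition \ref{smthdef}(4): if $g\colon Y\to\bR$ is smooth, then $g\circ f\colon X\to\bR$ is continuous and, in any chart $\psi$ of $X$, locally extends because $g$ does in the restricted chart and $f$ is an idempotent that in coordinates is a stratified smooth map; concretely, $g\circ f = g\circ f$ where $f$ restricted to $Y$ is the identity, so it suffices that $g\circ f$ be smooth on $X$, which one verifies chart-by-chart using that $f$ is a smooth stratified self-map.

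The main obstacle is the frontier condition, specifically making the closure/net argument clean when $\Sigma_Y$ must be refined into connected components; everything else is routine bookkeeping with subspace topologies, embedded submanifolds, and restriction of charts.
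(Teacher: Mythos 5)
Your proposal is correct and follows essentially the same route as the paper: Theorem 1.13 of \cite{kolar_natural_1993} applied to the idempotent $f|_S$ identifies each $S\cap Y=f(S)$ as an embedded submanifold of $S$, the frontier condition is transported from $\Sigma_X$ to $\Sigma_Y$ by pushing an approximating net/closure forward along the continuous idempotent $f$, and the smooth structure on $Y$ and the smoothness of $f:X\to Y$ are obtained by restricting charts and locally extending $f$ in coordinates (the paper makes this last step precise via Pflaum's local extension result \cite[Proposition 1.3.8]{pflaum_analytic_2001}). One small simplification: your worry about connectedness is unnecessary, since $S\cap Y=f(S)$ is the continuous image of the connected set $S$ and hence automatically connected, so no refinement of $\Sigma_Y$ into components is needed.
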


\begin{proof}
	First note that an easy application of the continuity of $f$ and the fact that $f=f\circ f$ automatically implies the following.
	\begin{itemize}
		\item $Y$ is a closed subspace of $X$.
		\item If $C\subset X$, then $f(\overline{C})=\overline{f(C)}$, where the closure of $f(C)$ is taken in $Y$. 
	\end{itemize}
	Furthermore, using the fact that $f$ is stratified and $f(S)\subset S$ for each stratum $S\in\Sigma_X$, it immediately follows that 
	\bge
	S=f^{-1}(f(S)).
	\ene
	
	Now, for any stratum $S\in\Sigma_X$, the subspace topology on $f(S)$ inherited from $S$ is the same as the subspace topology from $Y$. Hence, by Theorem 1.13 in \cite{kolar_natural_1993}, $f(S)$ is a topological manifold with the subspace topology from $Y$ and comes equipped with a canonical smooth manifold structure from $S$. We can also observe that $f(S)=S\cap Y$. Since $S$ is locally closed in $X$, it automatically follows that $S\cap Y$ is locally closed as well.
	
	Hence, all that we have to show is that $\Sigma_Y$ satisfies the frontier condition and we are finished showing $(Y,\Sigma_Y)$ is a stratified space. It will also automatically follow that $f:X\to Y$ is a stratified morphism.
	
	So, to show $\Sigma_Y$ satisfies the frontier condition, observe that if $R\in \Sigma_X$, then 
	\bge
	\overline{R}=\bigcup_{S}S,
	\ene
	where we are unioning over all strata $S\subset X$ such that $S\subset\overline{R}$. Hence, it follows that
	\bge
	f^{-1}(\overline{f(R)})=\overline{R}
	\ene
	Now, suppose that $S,R\in \Sigma_X$ so that $S\cap \overline{R}\cap Y\neq\emptyset$. This then implies that $S\cap \overline{R}\neq\emptyset$, hence $S\subset\overline{R}$ and so $S\cap Y\subset\overline{R\cap Y}$. 
	
	Now, suppose $X$ has a smooth structure and that $f:X\to X$ is a smooth map. Let $y\in Y$ and let $\phi:U\subset X\to \bR^n$ be a chart about $y$. It's a triviality to see that $\phi(Y\cap U)$ is locally closed in $\bR^n$, that $\phi(S\cap Y\cap U)$ is a locally closed embedded submanifold of $\bR^n$ for all $S\in\Sigma_X$, and that 
	\bge
	\phi:S\cap Y\cap U\to \phi(S\cap Y\cap U)
	\ene
	is a diffeomorphism. Hence, the restriction
	\bge
	\phi|_{Y\cap U}:Y\cap U\to \bR^n
	\ene
	is a chart. Now suppose that $\phi:U\to \bR^n$ and $\psi:V\to \bR^n$ are two charts of $X$ with $y\in U\cap V\cap Y$. Then, there exists opens $O_U,O_V\subset \bR^n$ and $y\in W\subset U\cap V$, and a diffeomorphism $H:O_U\to O_V$ such that $\phi(W)\subset O_U$, $\psi(W)\subset O_V$, and making the diagram commute
	\bge
	\begin{tikzcd}
		O_U\arrow[rr,"H"] && O_V\\
		\phi(W)\arrow[u] && \psi(W)\arrow[u]\\
		&W\arrow[ul,swap,"\phi"]\arrow[ur,"\psi"]&
	\end{tikzcd}
	\ene
	Intersecting with $Y$, we still have a commutative diagram
	\bge
	\begin{tikzcd}
		O_U\arrow[rr,"H"] && O_V\\
		\phi(W\cap Y)\arrow[u] && \psi(W\cap Y)\arrow[u]\\
		&W\cap Y\arrow[ul,swap,"\phi"]\arrow[ur,"\psi"]&
	\end{tikzcd}
	\ene
	Hence, $\phi:U\cap Y\to \bR^n$ and $\psi:V\cap Y\to \bR^n$ are compatible. Hence, the collection of charts on $Y$ given by intersecting the domains of charts on $X$ defines a smooth atlas on $Y$.
	
	Finally, to show that $f:X\to Y$ is smooth. Let $g:Y\to \bR$ be any smooth function, and fix a point $y\in Y$. Then, by definition, there exists a chart $\phi:U\to \bR^n$ of $X$ containing $y$ and a smooth function $h:\bR^n\to \bR$ so that the diagram commutes
	\bge
	\begin{tikzcd}
		U\cap Y\arrow[rr,"\phi|_{U\cap Y}"]\arrow[d,"g"] && \bR^n\arrow[dll,"h"]\\
		\bR && 
	\end{tikzcd}
	\ene
	Since $f\circ f=f$ and $y\in U\cap f(X)$, it follows that $U\cap f^{-1}(U)$ is non-empty. Thus, we may choose another chart $\psi:V\to \bR^n$ in $X$ containing $y$ so that $V\cup f(V)\subset U$ and so that $\psi=\phi|_V$. As shown by Pflaum \cite[Proposition 1.3.8]{pflaum_analytic_2001}, since $f:X\to X$ is smooth and since $f(y)=y$, there exists an open neighbourhood $W\subset V$ of $y$ and a smooth function $F:\bR^n\to \bR^n$ making the diagram commute
	\bge
	\begin{tikzcd}
		W\arrow[r,"\phi|_V"]\arrow[d,"f"] & \bR^n\arrow[d,"F"]\\
		U\arrow[r,"\phi"] & \bR^n
	\end{tikzcd}
	\ene
	It then follows that $g\circ (f|_W)=h\circ F\circ \phi|_W$. Hence, $g\circ f$ is smooth. Since $g$ was arbitrary, we conclude that $f$ is smooth.
\end{proof}

\begin{proof}(Of Theorem \ref{monthm})
	Observe that since $\mu:\bR\times A\to A$ is a stratified monoid action, it follows that $\mu_0:A\to A$ satisfies all the assumptions of the previous Lemma. Hence, $\mu_0(A)$ together with 
	\bge
	\Sigma_{\mu_0(A)}=\{S\cap \mu_0(A) \ | \ S\in\Sigma_A\}
	\ene
	is a stratified space and $\mu_0:A\to \mu_0(A)$ is a stratified map. Furthermore, $\mu_0(A)$ and $\mu_0:A\to \mu_0(A)$ are smooth if $A$ and $\mu$ are as well.
	
	So all we have to show now is that for each $S\in \Sigma_A$, the natural map $\mu_0|_S:S\to \mu_0(S)=S\cap \mu_0(A)$ is a vector bundle. However, this is clear since the restriction $\mu|_S:\bR\times S\to S$ is a smooth regular monoid action, hence is the scalar multiplication of a unique smooth vector bundle structure on $\mu_0:S\to \mu_0(S)$ by Theorem \ref{grab}.
\end{proof}

\section{Interesting Examples}
I would now like to take time to discuss certain interesting classes of examples of stratified vector bundles. We have already seen that vector bundles over stratified spaces are (non-canonically) stratified vector bundles and Whitney A stratified spaces always admit a tangent bundle. The latter are interesting since they are genuinely singular, whereas a usual vector bundle is not. We will now describe more ways in which genuinely singular stratified vector bundles can be realized.

\subsection{Singular Foliations}
For our first big class of stratified vector bundles, we discuss singular foliations. These can be understood as singular subbundles of the tangent bundle $TM$ of a smooth manifold $M$ that satisfy an integrability condition. We will first discuss the definition of singular foliations, then move on to examples where they can be stratified suitably. Let us first recall the classical Stefan-Sussmann definition of a singular foliation as found in Miyamoto \cite{miyamoto_basic_2023}. A more general discussion of generalized subbundles can be found in \cite{drager_smooth_2012}.
\begin{defs}
	Let $M$ be a smooth manifold. 
	\begin{itemize}
		\item[(i)] A {\bf smooth singular distribution} on $M$ is a subset $\Delta\subset TM$ of the tangent bundle such that for all $x\in M$, $\Delta_x:=\Delta\cap T_xM$ is a vector subspace of $x$ and so that if $v\in \Delta_x$, then there exist an open neighbourhood $U\subset M$ of $x$ and a vector field $X\in\mfX(U)$ such that $X_x=v$ and $X_y\in \Delta_y$ for all $y\in U$.
		\item[(ii)] A {\bf (Stefan-Sussmann) singular foliation} of $M$ is a partition $\mathcal{F}$ of $M$ into weakly embedded, connected submanifolds of $M$ called {\bf leaves} so that 
		\bge
		T\mathcal{F}:=\bigcup_{x\in M} T_xL_x,
		\ene
		is a smooth singular distribution, where $L_x$ is the leaf passing through $x$.
	\end{itemize}
\end{defs}

\begin{rem}
	\begin{itemize}
		\item[(1)] Another common definition of a singular foliation is the one found in in Androulidakis-Skandalis \cite{androulidakis_holonomy_2009}. Given a smooth manifold $M$, an {\bf Androulidakis-Skandalis singular foliation} is a $C^\infty(M)$ submodule $\mathscr{F}$ of $\mfX_c(M)$, the compactly supported vector fields on $M$, which is locally finitely generated and stable under Lie brackets. Every Androulidakis-Skandalis singular foliation induces a Stefan-Sussmann foliation in the following fashion. For any two $x,y\in M$, say $x\sim y$ if there exists finitely many $X_1,\dots,X_n\in \mathscr{F}$ and $t_1,\dots,t_n\in \bR$ such that
		\bge
		y=\phi^{X_1}_{t_1}\circ\cdots\circ \phi^{X_n}_{t_n}(x),
		\ene
		where $\phi^{X_i}_{t_i}$ denotes the flow of $X_i$ at time $t_i$. The resulting partition $\mathcal{F}$ into equivalence classes is then a Stefan-Sussmann singular foliation.
		
		\item[(2)] Any Androulidakis-Skandalis singular foliation induces a Stefan-Sussmann singular foliation, but this map is not injective. For example, consider the Androulidakis-Skandalis foliations on $\bR$
		\bge
		\mathscr{F}_1=C^\infty_c(\bR)\cdot \bigg\{x\frac{d}{dx}\bigg\}
		\ene
		and
		\bge
		\mathscr{F}_1=C^\infty_c(\bR)\cdot \bigg\{x^2\frac{d}{dx}\bigg\}.
		\ene
		They induce the same Stefan-Sussmann singular foliation, namely
		\bge
		\mathcal{F}=\{\bR_{<0},\bR_{>0},\{0\}\},
		\ene
		but their holonomy groupoids are not isomorphic \cite{androulidakis_holonomy_2009}.
		
		\item[(3)] For all that follows, when we say singular foliation, we mean a Stefan-Sussmann singular foliation. 
	\end{itemize}
\end{rem}

\begin{defs}\label{stratfoldef}
	Let $M$ be a smooth manifold. A {\bf stratified foliation} of $M$ consists of a singular foliation $\mathcal{F}$ of $M$ and a stratification $\Sigma$ of $M$ into embedded submanifolds such that 
	\begin{itemize}
		\item[(i)] Each stratum $S\in\Sigma$ is a union of leaves of $\mathcal{F}$.
		\item[(ii)] For each each stratum $S\in\Sigma$, if $\mathcal{F}_S$ denotes the induced foliation on $S$, then $\mathcal{F}_S$ is regular. 
	\end{itemize}
\end{defs}

\begin{egs}
	Not every singular foliation can be equipped with a compatible stratification. For example, let $C\subset[0,1]$ be the Cantor set and consider the partition $\mathcal{F}$ of $\bR$ consisting of
	\begin{itemize}
		\item the points of $C$
		\item the connected components of $\bR\setminus C$, a countable collection of disjoint open intervals.
	\end{itemize}
	This partition is indeed a foliation. Since $C$ is closed, we may find a smooth function $f:\bR\to \bR$ such that $f^{-1}(0)=C$. Define a vector field $X\in\mfX(\bR)$ by
	\bge
	X_x:=f(x)\frac{d}{dx}.
	\ene
	Clearly $X_x\in T_x\mathcal{F}$ for every $x\in \bR$. Furthermore, if $v\in T_{x_0}\mathcal{F}$ for some $x_0$, then $v=c\displaystyle\frac{d}{dx}\bigg|_{x=x_0}$. If $x_0\in C$, then $v=X_{x_0}$. Else, $v=\displaystyle\frac{c}{f(x_0)}X_{x_0}$.
	
	Suppose $\Sigma$ is a partition of $\bR$ into embedded submanifolds so that each element $S\in\Sigma$ is connected and a union of leaves. If $S\cap C\neq\emptyset$, then $S$ must be a singleton since $C$ is totally disconnected. Similarly, if $S\cap (\bR\setminus C)$, then $S$ must be a subinterval of one of the disjoint open intervals that make up $\bR\setminus C$. Hence, $\Sigma$ must be a finer partition of $\bR$ than $\mathcal{F}$. 
	
	However, this means that $\Sigma$ cannot be locally finite. Since  the Cantor set $C$ is dense and each of the points of $C$ are elements of $\Sigma$. Thus, there does not exist a stratification of $\bR$ which satisfies Definition \ref{stratfoldef}
\end{egs}

\begin{egs}
	A {\bf singular Riemannian foliation} of a manifold $M$ consists of a singular foliation $\mathcal{F}$ together with a Riemannian metric $g$ on $M$ such that if $\gamma$ is a geodesic and $L\subset M$ a leaf such that $\gamma$ is orthogonal to $L$, then $\gamma$ is orthogonal to every other leaf it intersects. 
	
	Let $x,y\in M$ and declare $x\sim y$ if and only if $\dim L_x=\dim L_y$. Now let $\Sigma$ be the partition of $M$ given by the connected components of the equivalence classes. As was shown by Molino\cite[Proposition 6.3]{molino_riemannian_1988}, $\Sigma$ is a stratification of $M$ by embedded locally closed submanifolds. 
\end{egs}

\begin{egs}
	Let $G\rightrightarrows M$ be a proper Lie groupoid and let $A\Rightarrow M$ be its Lie algebroid with anchor map $\rho:A\to TM$ \cite[Definition 13.34]{crainic_lectures_2021}. Then, $\rho(\Gamma_c(A))\subset \mfX_c(M)$ is an Androulidakis-Skandalis foliation and hence induces a Stefan-Sussmann foliation $\mathcal{F}$. It's not so hard to see that the leaves of this foliation are precisely the orbits of $G\rightrightarrows M$. 
	
	Now, let $\Sigma$ be the stratification on $M$ by Morita type. Observe that if $x,y\in M$ lie in the same orbit, then they have the same Morita type. It follows that each of the Morita type strata are unions of orbits. Proposition 4.28 and 4.29 in \cite{crainic_orbispaces_2018} imply that the induced foliation $\mathcal{F}_S$ on $S$ is regular. Hence, $(M,\Sigma,\mathcal{F})$ is a stratified foliation.
\end{egs}

\begin{thm}
	Let $(M,\mathcal{F},\Sigma)$ be a stratified foliation. Then, $T\mathcal{F}$ is a locally closed subspace of $TM$ and, together with the partition $\Sigma'=\{T\mathcal{F}_S \ | \ S\in\Sigma\}$, is canonically a differentiable stratified vector bundle over $(M,\Sigma)$.
\end{thm}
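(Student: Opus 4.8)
The plan is to check the three axioms of Definition~\ref{svb} for the restriction $p := \pi|_{T\mathcal{F}}\colon T\mathcal{F}\to M$ of the tangent projection, reducing everything possible to facts already established. The first move is the bookkeeping observation $T\mathcal{F} = \bigcup_{S\in\Sigma} T\mathcal{F}_S$: since each stratum $S$ is a union of leaves, the leaf $L_x$ through a point $x\in S$ lies in $S$ and is exactly the leaf of the induced foliation $\mathcal{F}_S$ through $x$, so $T_xL_x = (T\mathcal{F}_S)_x$ and hence $p^{-1}(S)\cap T\mathcal{F} = T\mathcal{F}_S$. Because $\mathcal{F}_S$ is regular, $T\mathcal{F}_S$ is a genuine vector subbundle of $TS$, in particular a smooth vector bundle over $S$; granting this, axioms (i) and (ii) of Definition~\ref{svb} come for free once we know that $(T\mathcal{F},\Sigma')$ is a (differentiable) stratified space and that $p$ and the scalar multiplication $\mu$ (the restriction of scalar multiplication on $TM$) are stratified and, in the differentiable case, smooth. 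So the work reduces to (a) verifying Definition~\ref{strdef} for $(T\mathcal{F},\Sigma')$, and (b) producing a compatible smooth structure.

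For (a), the point-set conditions (Hausdorff, second countable, paracompact) come for free, since $TM$ is a metrizable manifold and each of these passes to subspaces. Each $T\mathcal{F}_S$ is connected (the total space of a vector bundle over the connected manifold $S$), a topological manifold, and carries its canonical smooth structure; local finiteness of $\Sigma'$ follows from that of $\Sigma$ by pulling back along the continuous map $\pi$. The substantive point is the frontier condition, and here is the trick I would use. If $T\mathcal{F}_S\cap\overline{T\mathcal{F}_R}\neq\emptyset$ then, applying $\pi$ and noting $\pi(\overline{T\mathcal{F}_R}) = \overline R$ (one inclusion is continuity; the other holds because the zero section over $R$ accumulates onto the zero section over $\overline R$), we get $S\subset\overline R$ from the frontier condition on $M$. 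Now fix $(y,w)\in T\mathcal{F}_S$ and $y_k\in R$ with $y_k\to y$. Since $T\mathcal{F}$ is, by definition, a \emph{smooth} singular distribution and $w\in (T\mathcal{F})_y$, there are an open $U\ni y$ and $X\in\mfX(U)$ with $X_y = w$ and $X_z\in(T\mathcal{F})_z$ for every $z\in U$. For large $k$ we have $y_k\in U$ and $X_{y_k}\in(T\mathcal{F})_{y_k} = (T\mathcal{F}_R)_{y_k}$ (because $y_k\in R$, a union of leaves), so $(y_k,X_{y_k})\in T\mathcal{F}_R$ converges to $(y,w)$. This says precisely $T\mathcal{F}_S\subset\overline{T\mathcal{F}_R}$: the local vector fields defining the distribution are exactly the tool for propagating a vector over a small stratum into an adjacent large one.

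For (b), the natural smooth structure on $T\mathcal{F}$ is the one whose charts are the restrictions to $T\mathcal{F}$ of charts of the smooth manifold $TM$. Such a restriction is a legitimate stratified chart as soon as $T\mathcal{F}$ is locally closed in $TM$ (so its intersection with a chart domain is locally closed in Euclidean space) and each $T\mathcal{F}_S$ is an embedded submanifold of $TM$ --- which it is, being a subbundle of $TS$ with $S$ embedded in $M$. Mutual compatibility of these charts is then inherited verbatim from the atlas of the manifold $TM$, and smoothness of $p$ and of $\mu$ is automatic since they are restrictions of $\pi\colon TM\to M$ and of scalar multiplication on $TM$. So the whole differentiable structure funnels into one claim.

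That claim --- that $T\mathcal{F}$ is locally closed in $TM$ --- is the step I expect to be the real obstacle. The frontier argument above shows that the part of $\overline{T\mathcal{F}_R}$ already lying in $T\mathcal{F}$ is a union of strata, but it does not rule out that $\overline{T\mathcal{F}_R}$ meets the fibre over a lower stratum $S$ outside of $T\mathcal{F}_S$; excluding that is a Whitney-A-flavoured statement (compare Definition~\ref{whitdef}) about the singular distribution $T\mathcal{F}$. The way I would attack it is locally, near a point $x\in S$: regularity of $\mathcal{F}_S$ provides a local frame $Y_1,\dots,Y_k$ of $T\mathcal{F}_S$ near $x$; by the smooth-distribution property each $Y_i$ extends to a vector field tangent to $\mathcal{F}$ on a neighbourhood in $M$; one then wants to see that over nearby strata $T\mathcal{F}$ is still spanned by these extensions and that a sequence of vectors in $T\mathcal{F}_R$ over $y_k\to x$ cannot have coefficients, relative to the frame, running off to infinity, which would force the limit into $\spn\{Y_i(x)\} = T_xL_x$. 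Controlling those coefficients is the delicate part, and is where any additional regularity built into the hypotheses of Definition~\ref{stratfoldef} --- or supplied by the particular families of examples one has in mind --- must be brought to bear; once it is in place it combines with the frontier condition to give local closedness, and with it the theorem.
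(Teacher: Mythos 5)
Your argument is essentially the paper's. The frontier condition is handled identically: for $v\in T\mathcal{F}_S$ over $x\in S\subset\overline{R}$ you take $x_k\to x$ in $R$ and a local vector field $X$ tangent to $\mathcal{F}$ with $X_x=v$, so that $X_{x_k}\in T\mathcal{F}_R$ converges to $v$; and the differentiable structure, the projection, and the scalar multiplication are all obtained by restriction from $TM$, exactly as in the paper.

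The one point where you go beyond the paper is in flagging the local closedness of $T\mathcal{F}$ in $TM$ as an unresolved obstacle. You should know that the paper's own proof is silent on this claim, so you have not missed an argument it supplies; and your suspicion that extra regularity would be needed is justified, because the claim fails under the hypotheses of Definition \ref{stratfoldef} alone. Take $M=\bR^2$ foliated by the concentric circles about the origin together with the origin itself, stratified by $\{0\}$ and $\bR^2\setminus\{0\}$: this is a stratified foliation ($T\mathcal{F}$ is spanned by the rotation vector field), yet unit vectors tangent to the circles at points $x_k\to 0$ converge to nonzero vectors at the origin, which do not lie in $T\mathcal{F}_{\{0\}}=\{0\}$. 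Rescaling, such limit points enter every neighbourhood of the zero vector over the origin, so $T\mathcal{F}$ is not locally closed in $T\bR^2$ there, and your frame-and-coefficient strategy cannot be completed without additional hypotheses. This does not sink the rest of the statement: Definition \ref{strdef} only requires the pieces $T\mathcal{F}_S=T\mathcal{F}\cap\pi^{-1}(S)$ to be locally closed in $T\mathcal{F}$ itself, which they are since each $S$ is locally closed in $M$. So your proof, like the paper's, establishes the stratified-vector-bundle conclusion, and the isolated assertion of local closedness in $TM$ is the part that should be discarded or strengthened with further assumptions.
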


\begin{proof}
	Let $(M,\mathcal{F},\Sigma)$ be a stratified foliation and let $\pi:TM\to M$ be the projection of the projection of the usual (smooth) tangent bundle on $M$. On each piece $S\in \Sigma$, $\mathcal{F}$ induces a regular foliation $\mathcal{F}_S$. By Frobenius' Theorem, the tangent bundle $T\mathcal{F}_S$ in $TS$ is an embedded involutive subbundle. Thus, let us define
	\bge
	T\mathcal{F}:=\bigcup_{S\in\Sigma}T\mathcal{F}_S.
	\ene
	The only thing that needs to be checked is that $T\mathcal{F}$ satisfies the frontier condition. 
	
	Suppose $S,R\in\Sigma$ are two strata with $T\mathcal{F}_S\cap\overline{T\mathcal{F}_R}\neq\emptyset$, fix $v\in T\mathcal{F}_S$ and let $x=\pi(v)$. Then by continuity of $T\mathcal{F}\to M$, we have $S\cap\overline{R}\neq\emptyset$ and hence there exists a sequence $\{x_k\}\subset R$ such that $\lim _{k\to\infty}x_k=x$. Now, by assumption, there exists a vector field $X\in\mfX(M)$ such that $X_x=v$ and $X_y\in T_y\mathcal{F}$ for all $y\in M$. Hence, it immediately follows that $X_{x_k}\in T_{x_k} \mathcal{F}_R$ for all $k$ and we immediately have
	\bge
	\lim_{k\to\infty} X_{x_k}=X_x=v.
	\ene
	
	The differentiable structure on $T\mathcal{F}_R\to M$ is inherited from being a subset of a smooth manifold, namely $TM$. Both projection and scalar multiplication are restrictions of the projection and scalar multiplication on $TM$, and so both are smooth.
\end{proof}

\subsection{Equivariant Vector Bundles}
The next family of examples arises from smooth equivariant vector bundles. It is well known that if $K$ is a compact connected Lie group, $P\to M$ a principal $K$-bundle, and $\pi:E\to P$ is a $K$-equivariant vector bundle, then the induced map $E/K\to P/K\cong M$ is then a smooth vector bundle. We would now like to extend a version of this result to general proper actions of connected Lie groups. That is, given a connected Lie group $G$ acting properly on a manifold $M$ and a $G$-equivariant vector bundle $\pi:E\to M$, we will develop a procedure for getting smooth vector bundles over each of the strata of $M/G$ which will piece together into stratified vector bundles.

For all that follows, let $G$ be a connected Lie group and $M$ a proper $G$-space. Write $\mathcal{S}_G(M)$ for the orbit type stratification of $M$ (see Definition \ref{orbitypestrat}) and $\mathcal{S}_G(M/G)$ for the canonical stratification on $M/G$ (see Definition \ref{canonicalstrat}). By vector bundle, we will always mean a smooth vector bundle. To begin, let us state some easy facts about equivariant vector bundles.

\begin{prop}\label{easyprop}
	If $\pi:E\to M$ is a $G$-equivariant vector bundle and $\mu$ is its scalar multiplication, then the following holds.
	\begin{itemize}
		\item[(i)] $E$ is a proper $G$-space, hence admits an orbit type stratification $\mathcal{S}_G(E)$ and a canonical stratification on the orbit space $\mathcal{S}_G(E/G)$.
		\item[(ii)] The induced maps $\overline{\pi}:E/G\to M/G$ and $\overline{\mu}:\bR\times (E/G)\to E/G$ are smooth (not necessarily stratified) maps. 
	\end{itemize}
\end{prop}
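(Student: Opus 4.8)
My plan is to verify the two claims largely by reducing to properties of equivariant vector bundles and the general theory of proper actions already recalled in the excerpt. For (i), I would first check that $E$ is a proper $G$-space. The key observation is that the bundle projection $\pi:E\to M$ is $G$-equivariant and proper (it is a fibre bundle with compact base... wait, no—fibres are $\bR^k$, not compact; $\pi$ is not proper). Instead I would argue directly: the action map $E\times E\to E\times E$, $(g,e)\mapsto(g\cdot e, e)$, fits into a commuting square over $M\times M$ with the action map of $M$, and I would use that $\pi\times\pi:E\times E\to M\times M$ together with properness of the $M$-action controls the $E$-action. More precisely, given a compact $L\subset E\times E$, its image under $\pi\times\pi$ is compact in $M\times M$, so the preimage in $G\times M$ under the $M$-action map is compact; then the preimage of $L$ under the $E$-action map sits inside this compact set crossed with a compact slice in the fibres, using that $E$ is locally a product and the $G$-action is linear on fibres. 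Once $E$ is a proper $G$-space, the existence of $\mathcal{S}_G(E)$ and $\mathcal{S}_G(E/G)$ is immediate from the Examples following Definition \ref{canonicalstrat}.

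For (ii), smoothness of $\overline{\pi}$ and $\overline{\mu}$ I would deduce from the characterization of smooth maps between quotients established in Example \ref{smoothorbits}, namely equation (\ref{quotmap}): a map out of $M/G$ (resp. $E/G$) is smooth iff its pullback along the quotient map is smooth and $G$-invariant. Concretely, the diagram
\bge
\begin{tikzcd}
E\arrow[r,"\pi"]\arrow[d] & M\arrow[d]\\
E/G\arrow[r,"\overline{\pi}"] & M/G
\end{tikzcd}
\ene
commutes, so $\overline{\pi}$ is the map on quotients induced by the $G$-equivariant smooth map $\pi$; since smooth functions on $M/G$ pull back to $G$-invariant smooth functions on $M$, and these in turn pull back along $\pi$ to $G$-invariant smooth functions on $E$ (which descend to $E/G$), we get that $\overline{\pi}$ is smooth. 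The same argument applies to $\overline{\mu}$, where one works with the $G$-equivariant smooth map $\mathrm{id}\times\mu:\bR\times E\to E$ and the (proper) $G$-action on $\bR\times E$ with $G$ acting trivially on the $\bR$ factor; here one uses that $C^\infty(\bR\times(E/G))\cong C^\infty((\bR\times E)/G)\cong C^\infty(\bR\times E)^G$.

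The main obstacle I anticipate is the properness of the $G$-action on $E$ in part (i): unlike the base, the fibres are non-compact, so one cannot simply invoke compactness of $\pi$, and one must genuinely use the local triviality of $E$ together with the linearity (hence local boundedness) of the fibrewise $G$-action to trap the preimage of a compact set inside a compact set. Everything else is a formal consequence of the quotient-function characterization from Example \ref{smoothorbits}. I would also remark that no stratified compatibility of $\overline{\pi}$ or $\overline{\mu}$ is claimed here—only smoothness as maps of differentiable spaces—which is why the argument stays at the level of global smooth functions rather than charts.
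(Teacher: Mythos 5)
Your proposal is correct and follows essentially the same route as the paper: part (i) is the general fact that a continuous equivariant map into a proper $G$-space pulls back properness (the paper simply cites Lee, Proposition 21.5, for this), and part (ii) is exactly the paper's argument via the identifications $C^\infty(M/G)\cong C^\infty(M)^G$ and $C^\infty(E/G)\cong C^\infty(E)^G$. One small simplification for (i): you do not need local triviality or fibrewise linearity at all, since the $E$-coordinate of any point in the preimage of a compact $L\subset E\times E$ under the action map already lies in the compact set $\mathrm{pr}_2(L)$, so that preimage is a closed subset of a product of two compact sets.
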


\begin{proof}
	\begin{itemize}
		\item[(i)] Using the fact that the action of $G$ on $M$ is proper, that $\pi:E\to M$ is continuous and equivariant, and using Proposition 21.5 in \cite{lee_introduction_2013}, we immediately deduce the action on $E$ is proper.
		\item[(ii)] By the discussion in Example \ref{smoothorbits}, we have $C^\infty(E/G)\cong C^\infty (E)^G$ and $C^\infty(M/G)\cong C^\infty(M)^G$ via the quotient maps $E\to E/G$ and $M\to M/G$. Using this, one can check that $\overline{\pi}$ and $\overline{\mu}$ pull back smooth functions to smooth functions, and thus are both smooth maps of differential stratified spaces.
	\end{itemize}
\end{proof}

\begin{rem}
	The map $\overline{\pi}:E/G\to M/G$ need not be a stratified vector bundle. Indeed, consider $M=\{pt\}$, $E=\bR^2$, $\pi:\bR^2\to \{pt\}$ the only map. Let $S^1$ act on $M$ trivially and on $E$ as a subgroup of $\GL(2,\bR)$. Then, $\pi:E\to M$ is a proper $S^1$-equivariant vector bundle. However, $E/S^1$ has no natural vector space structure on it. 
	
	Indeed, let $[e]$ denote the equivalence class of $e\in\bR^2$ in $\bR^2/S^1$. Then,  given $e,e'\in\bR^2$, we could define
	\bge
	[e]+[e']:=[e+e'].
	\ene
	However, it's straightforward to show that this operation is not well-defined since for any nonzero $e\in \bR^2$, $[e]=[-e]$ and $[2e]\neq [0]$. 
	
	More seriously, $E/S^1$ is homeomorphic to $[0,\infty)$ via 
	\bge
	E/S^1;\quad [e]\mapsto \|e\|^2
	\ene
	and $[0,\infty)$ is not homeomorphic to any real finite dimensional vector space endowed with the standard topology. 
\end{rem}

Therefore, to produce a stratified vector bundle out of a $G$-equivariant vector bundle $\pi:E\to M$, we will need to modify $E$ somehow. The procedure that we outline here will involve choosing a particular subset of $E$ which is better behaved under taking quotients. This will require a local normal form that relies on a homotopic property of equivariant vector bundles.

\begin{lem}\label{homotopybund}
	Let $K$ be a compact connected Lie group, $\pi:E\to M$ be a $K$-equivariant vector bundle, and $f_0,f_1:N\to M$ two smooth $K$-equivariant maps which are $K$-homotopic to one another. Then, as $K$-equivariant vector bundles over $N$, the pullback bundles $f_0^{-1}E$ and $f_1^{-1}E$ are isomorphic.
\end{lem}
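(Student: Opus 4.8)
The plan is to reduce to the classical fact that a vector bundle on a cylinder is pulled back from its base, and then run the classical proof equivariantly using an invariant connection. By hypothesis there is a smooth $K$-equivariant homotopy $F\colon N\times[0,1]\to M$ with $F(\cdot,0)=f_0$ and $F(\cdot,1)=f_1$, where $K$ acts on $N\times[0,1]$ through the first factor (trivially on $[0,1]$). Then $W:=F^{-1}E\to N\times[0,1]$ is a $K$-equivariant vector bundle, and writing $\iota_i\colon N\to N\times[0,1]$, $n\mapsto(n,i)$ for the ($K$-equivariant) inclusions, functoriality of pullback gives $K$-equivariant isomorphisms $f_i^{-1}E\cong\iota_i^{-1}W=W|_{N\times\{i\}}$. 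So it suffices to show that for any $K$-equivariant vector bundle $W\to N\times[0,1]$ the restrictions $W|_{N\times\{0\}}$ and $W|_{N\times\{1\}}$ are isomorphic as $K$-equivariant bundles over $N$.

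For that, I would first build a $K$-invariant linear connection $\nabla$ on $W$: take any linear connection $\nabla^{0}$, and, using that the space of linear connections on a fixed bundle is an affine space modelled on $\Omega^{1}(N\times[0,1];\End W)$ and that $K$ is compact with normalized Haar measure $dk$, set $\nabla:=\int_{K}k^{*}\nabla^{0}\,dk$. This is again a linear connection, and it is visibly $K$-invariant, i.e. $k^{*}\nabla=\nabla$ for all $k\in K$.

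Then, for each $n\in N$ let $\gamma_n(t)=(n,t)$ and let $P_n\colon W_{(n,0)}\to W_{(n,1)}$ denote $\nabla$-parallel transport along $\gamma_n$. These fibrewise maps assemble into a bundle map $\Phi\colon W|_{N\times\{0\}}\to W|_{N\times\{1\}}$ covering $\mathrm{id}_N$; smooth dependence of solutions of linear ODEs on parameters makes $\Phi$ smooth, and invertibility of parallel transport makes it a bundle isomorphism. Since $k\cdot\gamma_n=\gamma_{k\cdot n}$ as curves and $\nabla$ is $K$-invariant, parallel transport intertwines the $K$-action, $k\circ P_n=P_{k\cdot n}\circ k$, which says precisely that $\Phi$ is $K$-equivariant. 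Combined with the first step this yields $f_0^{-1}E\cong f_1^{-1}E$ as $K$-equivariant bundles over $N$.

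The main obstacle I expect is the bookkeeping at the end: pinning down the intended meaning of ``$K$-homotopic'' (an equivariant homotopy $F$, with $K$ acting trivially on the interval) and verifying carefully that a $K$-invariant connection produces $K$-equivariant parallel transport — this is the one place where compactness of $K$ is really used, through the averaging. Non-compactness of $N$ causes no difficulty, since both the averaging and the parallel transport are performed fibrewise over $N$; and although in the non-equivariant case one can avoid connections via a patching or Tietze-type extension argument, that becomes awkward equivariantly, so the invariant-connection route is the cleanest.
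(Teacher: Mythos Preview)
Your argument is correct. Both the reduction to the cylinder and the construction of the equivariant isomorphism via parallel transport for a $K$-invariant connection are sound; compactness of $K$ enters exactly where you say, in averaging $\nabla^0$ over the Haar measure, and the ODE argument is insensitive to non-compactness of $N$.

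The paper takes a different route: it simply invokes Segal's proof of equivariant homotopy invariance, whose key input (Segal's Proposition~1.1) is the averaging trick applied to \emph{sections} rather than connections---one integrates sections over $K$ to produce enough invariant sections, and then runs the classical homotopy-lifting/extension argument equivariantly. The only novelty the paper notes is that $M$ need not be compact: paracompactness still gives $\Gamma(E)$ a Fr\'echet structure, so Segal's averaging goes through. Your connection-based argument is arguably cleaner here, since it sidesteps any discussion of function-space topologies and makes the role of compactness of $K$ completely transparent; Segal's route, on the other hand, yields more (e.g.\ that equivariant bundles over compact bases are summands of trivial $G$-bundles), which is why it is the standard reference.
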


\begin{proof}
	Exactly the same proof as in Segal \cite{segal_equivariant_1968}. The only difference here is that $M$ is not assumed to be compact. However, since $M$ is paracompact, the space of sections $\Gamma(E)$ still a Fr\'{e}chet space \cite[Lemma 30.3]{kriegl_convenient_1997}, hence the averaging trick of Segal in \cite[Proposition 1.1]{segal_equivariant_1968} still holds and so do its consequences.
\end{proof}

Due to the slice theorem for proper Lie group actions, for every smooth proper $G$-space $M$ and every point $x\in M$, we can find a neighbourhood $U\subset M$ of $x$ which is $G$-equivariantly diffeomorphic to $G\times_H V$, where $H\leq G$ is a compact subgroup and $V$ is a linear $H$-representation. For all the constructions that follow, we will need to construct a local normal form for $G$-equivariant vector bundles over such a slice chart. For this purpose, suppose now that $W$ is another linear $H$-representation and define
\begin{equation}\label{slicebund}
	E=G\times_H(V\times W).
\end{equation}
We can make $E$ into a $G$-equivariant vector bundle over $G\times_H V$ by declaring
\begin{equation}
	E\to G\times_H V;\quad [g,(v,w)]\mapsto [g,v].
\end{equation}
We will now use the homotopy property of Lemma \ref{homotopybund} to show that every $G$-equivariant vector bundle locally has the form of equation (\ref{slicebund})

\begin{lem}[Local Normal Form of  an Equivariant Vector Bundle]\label{vbundnorm}
	Let $H\leq G$ be a compact subgroup and $V$ a linear $H$-representation. Then, for any $G$-equivariant vector bundle $E$ over $G\times_H V$, there exists an isomorphism of $G$-vector bundles
	\bge
	\begin{tikzcd}
		E\arrow[rr]\arrow[dr] && G\times_H(V\times W)\arrow[dl]\\
		&G\times_H V&
	\end{tikzcd}
	\ene
	where $W$ is a linear $H$-representation.
\end{lem}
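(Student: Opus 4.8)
The plan is to transfer the problem across the induction equivalence between $G$-equivariant vector bundles over $G\times_H V$ and $H$-equivariant vector bundles over $V$, and then exploit that $V$ is $H$-equivariantly contractible to the origin, invoking Lemma \ref{homotopybund}.

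\emph{Step 1 (restriction to the slice).} Let $\iota\colon V\hookrightarrow G\times_H V$, $v\mapsto[1,v]$, be the slice inclusion; its image is preserved by $H$ and the $G$-stabilizer of $[1,0]$ is exactly $H$. Given a $G$-equivariant vector bundle $E\to G\times_H V$, put $E|_V:=\iota^{-1}E$, an $H$-equivariant vector bundle over $V$ with $(E|_V)_0=E_{[1,0]}$. I would then show that the natural map
\[
\Phi\colon G\times_H(E|_V)\longrightarrow E,\qquad [g,\xi]\mapsto g\cdot\xi ,
\]
is an isomorphism of $G$-equivariant vector bundles covering $\mathrm{id}_{G\times_H V}$. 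Well-definedness is immediate from the action axioms and the fact that $H$ preserves $E|_V$; $\Phi$ covers the identity since $g\cdot\xi$ lies over $g\cdot[1,v]=[g,v]$, and it is fibrewise linear because $G$ acts on $E$ by vector-bundle automorphisms. Bijectivity follows by moving any $e\in E$ lying over $[g,v]=g\cdot[1,v]$ to $g^{-1}\cdot e\in E|_V$, and checking that two representatives with the same image differ by an element of $H$; smoothness of $\Phi$ is the descent of the smooth map $(g,\xi)\mapsto g\cdot\xi$ along the submersion $G\times(E|_V)\to G\times_H(E|_V)$. Since a smooth, fibrewise-linear, fibrewise-bijective bundle map over a common base covering the identity is automatically a vector-bundle isomorphism, $\Phi$ is one. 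I expect this step to be the main obstacle: it is where all the bookkeeping of the $G$-structure is carried out.

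\emph{Step 2 (contracting $V$).} The linear maps $r_t\colon V\to V$, $r_t(v)=tv$, are $H$-equivariant, with $r_1=\mathrm{id}_V$ and $r_0$ the constant map onto the $H$-fixed point $0$, and $(t,v)\mapsto r_t(v)$ is an $H$-homotopy from $r_0$ to $r_1$. Applying Lemma \ref{homotopybund} to $E|_V$ gives, as $H$-equivariant vector bundles over $V$,
\[
E|_V\ \cong\ r_1^{-1}(E|_V)\ \cong\ r_0^{-1}(E|_V)\ =\ V\times W ,
\]
where $W:=(E|_V)_0=E_{[1,0]}$ is a linear $H$-representation and $V\times W$ carries the diagonal $H$-action and the projection to $V$. (Here $H$ is compact but need not be connected; the connectedness hypothesis in Lemma \ref{homotopybund} may be dropped for this application, since its proof, the averaging argument of Segal, uses only compactness of the group.)

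\emph{Step 3 (inducing back up).} The functor $G\times_H(-)$ sends isomorphisms of $H$-equivariant bundles over $V$ to isomorphisms of $G$-equivariant bundles over $G\times_H V$, so Step 2 yields $G\times_H(E|_V)\cong G\times_H(V\times W)$; the right-hand side is precisely the bundle defined in equation (\ref{slicebund}) with its projection $[g,(v,w)]\mapsto[g,v]$. Precomposing with $\Phi^{-1}$ from Step 1 produces the required $G$-equivariant isomorphism $E\cong G\times_H(V\times W)$ over $G\times_H V$. Everything after Step 1 is formal; the only subtlety elsewhere is the mild extension of Lemma \ref{homotopybund} to a disconnected compact group noted above.
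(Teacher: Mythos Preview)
Your proposal is correct and follows essentially the same route as the paper: restrict $E$ to the slice $V$, use the $H$-equivariant contraction of $V$ to $0$ together with Lemma~\ref{homotopybund} to trivialize $E|_V\cong V\times W$, and then induce back up via the map $[g,a]\mapsto g\cdot a$ to identify $G\times_H(E|_V)$ with $E$. You supply more detail than the paper (which leaves the verification that $\Phi$ is an isomorphism as ``an easy task''), and you correctly flag the connectedness hypothesis in Lemma~\ref{homotopybund}, which the paper silently ignores; as you note, Segal's averaging argument only needs compactness, so this is not a genuine obstruction.
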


\begin{proof}
	Let $\pi:E\to M$ be a $G$-equivariant vector bundle. Observe that $V$ can be identified with $H\times_H V\subset G\times_H V$. Defining $A=E|_V$, we obtain an $H$-equivariant vector bundle $\pi|_A:A\to V$ over $V$. Since $V$ is $H$-equivariantly homotopic to $\{0\}$ and $H$ is compact, it follows that there is an $H$-equivariant isomorphism of vector bundles over $V$
	\bge
	A\cong V\times W,
	\ene
	where $W$ is an $H$-representation. 
	
	Now, observe that $G\times_H A$ together with the map 
	\bge
	\widetilde{\pi}:G\times_H A\to M;\quad [g,a]\mapsto [g,\pi(a)]
	\ene
	is a $G$-equivariant vector bundle over $M$, where the fibre over a point $[g,v]\in M$ is naturally identified with $A_{[e,v]}$. It's then an easy task to check that the map
	\bge
	\phi:G\times_H A\to E;\quad [g,a]\mapsto g(a)
	\ene
	defines an isomorphism of $G$-equivariant vector bundles over $M$.
\end{proof}

Now we are ready to construct our better behaved subset of a $G$-equivariant vector bundle $\pi:E\to M$. For each $x\in M$, if we let $G_x\leq G$ denote the stabilizer subgroup of $x$, then equivariance implies that $E_x$ is a linear $G_x$-representation. We can thus define
\begin{equation}\label{tilde}
	\widetilde{E}:=\bigcup_{x\in M} E_x^{G_x}\subset E
\end{equation}
and equip $\widetilde{E}$ with the subspace topology inherited from $E$. It's clear that $\widetilde{E}$ is $G$-invariant. 

\begin{lem}\label{stratistrat}
	Let $\pi:E\to M$ be a $G$-equivariant vector bundle, and $S\in\mathcal{S}_G(M)$ an orbit type stratum of $M$. Then, $\widetilde{E}|_S$ is a smooth subbundle of $E|_S$. Furthermore, if $\tau\in\mathcal{S}_G(E)$ is an orbit type stratum of $E$ which intersects the zero section non-trivially, then $\tau=\widetilde{E}|_S$ for some orbit type stratum $S$ of $M$.
\end{lem}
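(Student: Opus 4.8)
The plan is to treat the two assertions separately, in both cases combining the local normal form for equivariant vector bundles (Lemma \ref{vbundnorm}) with the slice-theorem description of the orbit type strata recalled in the examples of Section~2.

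For the first assertion, fix $x\in S$; after conjugating we may assume $G_x=H$, and the slice theorem gives a $G$-invariant neighbourhood $U$ of $x$ with $U\cong G\times_H V$ for a linear $H$-representation $V$. By Lemma~\ref{vbundnorm} we may write $E|_U\cong G\times_H(V\times W)$ for a linear $H$-representation $W$, with bundle map $[g,(v,w)]\mapsto[g,v]$. As recalled earlier, the orbit-type stratum through $x$ meets $U$ in $M_{(H)}\cap U=G\times_H V^H=(G/H)\times V^H$ (using that $H$ is compact, so a subgroup of $H$ conjugate in $G$ to $H$ must equal $H$). For $y=[g,v]$ with $v\in V^H$ one has $G_y=gHg^{-1}$, and a direct computation shows $gHg^{-1}$ acts on the fibre $E_y=\{[g,(v,w)]:w\in W\}$ via the isomorphism $gHg^{-1}\cong H$ and the $H$-action on $W$, whence $E_y^{G_y}=\{[g,(v,w)]:w\in W^H\}$. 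Therefore
\[
\widetilde E|_{S\cap U}\cong G\times_H(V^H\times W^H)=(G/H)\times V^H\times W^H ,
\]
which is manifestly a smooth subbundle of constant rank $\dim W^H$ of $E|_{S\cap U}\cong (G/H)\times V^H\times W$. Since being a subbundle is local on the base and the rank is locally constant, hence constant on the connected stratum $S$, we conclude $\widetilde E|_S$ is a smooth subbundle of $E|_S$.

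For the second assertion, suppose $0_x\in\tau$ for some $x\in M$. Since $g\cdot 0_x=0_{gx}$ and the linear action of $G_x$ on $E_x$ fixes the origin, $G_{0_x}=G_x$, so $\tau\subseteq E_{(G_x)}$. Let $S$ be the orbit-type stratum of $M$ through $x$; I claim $\tau=\widetilde E|_S$. For $\widetilde E|_S\subseteq\tau$: if $\xi\in\widetilde E|_S$ then $\pi(\xi)\in S$ and all of $G_{\pi(\xi)}$ fixes $\xi$, so $G_\xi=G_{\pi(\xi)}$ is conjugate to $G_x$ and $\xi\in E_{(G_x)}$; as $\widetilde E|_S$ is connected (a vector bundle over the connected manifold $S$, by the first part) and contains $0_x\in\tau$, it lies inside the single connected component $\tau$ of $E_{(G_x)}$. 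For the reverse inclusion I would show $\widetilde E|_S$ is open and closed in $\tau$. Given $\xi\in\tau$, apply the slice theorem to the proper $G$-space $E$ at $\xi$ and Lemma~\ref{vbundnorm} once more: one gets $\Omega\cong G\times_{G_\xi}Z$, with $Z$ the normal representation of the $G$-orbit of $\xi$ in $E$, and — using the compatibility of a slice chart of $E$ at $\xi$ with a slice chart of $M$ at $\pi(\xi)$, under which $Z$ splits $G_\xi$-equivariantly as $E_{\pi(\xi)}\oplus V'$ with $V'$ the normal representation of $\pi(\xi)$ in $M$ — one computes both $\tau\cap\Omega$ and $\widetilde E|_S\cap\Omega$ explicitly as fixed-point sets of the relevant isotropy group acting on $Z$. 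Reading these off shows that in a neighbourhood of $\xi$ in $\tau$, either $\xi\in\widetilde E|_S$ and the whole neighbourhood lies in $\widetilde E|_S$, or $\xi\notin\widetilde E|_S$ and the neighbourhood misses $\widetilde E|_S$. Since $\tau$ is connected and $\widetilde E|_S$ is a nonempty clopen subset of it, $\tau=\widetilde E|_S$.

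The routine part is the first assertion, where everything is visible in the normal form. The delicate part is the reverse inclusion in the second assertion — ruling out that the orbit-type stratum $\tau$ of $E$ is strictly larger than $\widetilde E|_S$ — which is exactly where the interplay between the isotropy of a vector, the isotropy of its base point, and the connectedness of $\tau$ must be handled with care; the crucial input is the compatibility of the slice charts of $E$ and $M$ and the resulting splitting $Z\cong E_{\pi(\xi)}\oplus V'$, which is what forces the fixed-point computations on the two sides to agree locally.
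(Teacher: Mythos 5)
Your treatment of the first assertion is correct and is essentially the paper's own argument: pass to a slice $U\cong G\times_H V$, invoke Lemma \ref{vbundnorm} to write $E|_U\cong G\times_H(V\times W)$, and read off $\widetilde{E}|_{S\cap U}\cong G\times_H(V^H\times W^H)$; your explicit verification that $G_{[g,v]}=gHg^{-1}$ acts on the fibre through the $H$-action on $W$ is exactly the computation the paper leaves implicit. Likewise your inclusion $\widetilde{E}|_S\subseteq\tau$ (via $G_\xi=G_{\pi(\xi)}$ for $\xi\in\widetilde{E}$ and connectedness of the total space of $\widetilde{E}|_S\to S$) is sound.

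The gap is in the closedness half of your clopen argument for the reverse inclusion $\tau\subseteq\widetilde{E}|_S$. The asserted dichotomy --- that near a point $\xi\in\tau\setminus\widetilde{E}|_S$ a whole neighbourhood of $\xi$ in $\tau$ misses $\widetilde{E}|_S$ --- is false, and in fact no argument can close this step because the reverse inclusion itself fails in general. Take $G=S^1$ acting on $M=\bR^2$ by rotations and $E=TM=\bR^2\times\bR^2$ with the diagonal action (the very example used in the remark following this lemma). A point $(x,v)$ has trivial stabilizer unless $x=v=0$, so $E_{(\{e\})}=\bR^4\setminus\{0\}$ is connected and is a single stratum $\tau$; it meets the zero section at every $0_x$ with $x\neq 0$. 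Yet for $S=\bR^2\setminus\{0\}$ one has $\widetilde{E}|_S=(\bR^2\setminus\{0\})\times\bR^2\subsetneq\tau$, and at $\xi=(0,v)$ with $v\neq 0$ every neighbourhood of $\xi$ in $\tau$ meets $\widetilde{E}|_S$ although $\xi\notin\widetilde{E}|_S$. The structural reason is the one your slice computation would reveal if carried out at such a $\xi$: a nonzero vector over a more singular base point $y$ can satisfy $G_\xi\cong H$ with $G_y\supsetneq G_\xi$, so $\xi$ lies in $E_{(H)}$ without lying over $S$, and nothing separates it from $\widetilde{E}|_S$ inside $\tau$. You have in fact located precisely the delicate point --- the paper's own proof passes over it by deducing the global identity $\tau=\widetilde{E}|_S$ from the local normal form at $0_x$ alone --- but your proposed resolution does not (and cannot) close it; only the containment $\widetilde{E}|_S\subseteq\tau$ and the coincidence of $\tau$ with $\widetilde{E}|_S$ in a neighbourhood of the zero section actually hold.
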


\begin{proof}
	Fix a point $x\in S$ and let $H=G_x$. Working locally, we may assume
	\bge
	M=G\times_H V,
	\ene
	where $V$ is a linear $H$-representation, and we may assume that $S=M_{(H)}$. Hence,
	\bge
	S=G\times_H V^H=G/H\times V^H.
	\ene
	
	Now, by Lemma \ref{vbundnorm}, we may assume $E=G\times_H(V\times W)$, where $W$ is an $H$-representation. Hence,
	\bge
	E|_S=G\times_H(V^H\times W).
	\ene
	It's clear then that
	\bge
	\widetilde{E}|_S=G\times_H(V^H\times W^H)=G/H\times(V^H\times W^H)
	\ene
	which is clearly a smooth subbundle of $E|_S$. 
	
	Finally, let $\tau\subset E$ be an orbit type stratum of $E$ intersecting the zero section of $E$ non-trivially. Without loss of generality, we may assume that $0_x\in \tau$ for the same $x$ as above. It is then easy to see that $\tau$ is a connected component of $E_{(H)}$ and that it has the local form
	\bge
	\tau=G/H\times (V^H\times W^H),
	\ene
	which is precisely equal to $\widetilde{E}|_S$.
\end{proof}

Define two partitions, one on $\widetilde{E}$ and one on $\widetilde{E}/G$:
\begin{align}
	\mathcal{S}_G(\widetilde{E})&:=\{\widetilde{E}|_S \ | \ S\in\mathcal{S}_G(M)\}\\
	\mathcal{S}_G(\widetilde{E}/G)&:=\{\widetilde{E}|_S/G \ | \ S\in\mathcal{S}_G(M)\}.
\end{align}

\begin{thm}
	Let $\pi:E\to M$ be a $G$-equivariant vector bundle. Then, $\widetilde{E}\to M$ (see Equation (\ref{tilde})) and $\widetilde{E}/G\to M/G$ are canonically differentiable stratified vector bundles. 	
\end{thm}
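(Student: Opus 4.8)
The plan is to deduce the theorem from Lemma~\ref{stratistrat} and the local normal form of Lemma~\ref{vbundnorm}, transferring the stratified and differentiable structures already available on $E$ and on $E/G$ by Proposition~\ref{easyprop}. Write $p:\widetilde E\to M$ for the restriction of $\pi$, $\overline p:\widetilde E/G\to M/G$ for the induced map, and $\widetilde\mu:\bR\times\widetilde E\to\widetilde E$ for the restriction of the scalar multiplication $\mu$, which is well defined because each $E_x^{G_x}$ is a linear subspace.

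First I would check that $(\widetilde E,\mathcal{S}_G(\widetilde E))$ is a stratified space. By Lemma~\ref{stratistrat} and its proof the pieces $\widetilde E|_S$ are precisely the orbit type strata of $E$ that meet the zero section, and since every point of $\widetilde E$ lies in $E_x^{G_x}\subset\widetilde E|_S$ for the stratum $S\ni x$, the family $\mathcal{S}_G(\widetilde E)$ is exactly the subcollection of $\mathcal{S}_G(E)$ consisting of strata contained in $\widetilde E$. Hence each $\widetilde E|_S$ is connected, is a topological manifold, carries its canonical smooth structure as a smooth subbundle of $E|_S$, and, being locally closed in $E$, is locally closed in $\widetilde E$ with the subspace topology; local finiteness and the Hausdorff, second countable and paracompact conditions pass to the subspace. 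For the frontier condition, since closure in a subspace is the trace of the closure, $\widetilde E|_S\cap\overline{\widetilde E|_R}^{\,\widetilde E}\neq\emptyset$ forces $\widetilde E|_S\cap\overline{\widetilde E|_R}^{\,E}\neq\emptyset$, whence $\widetilde E|_S\subset\overline{\widetilde E|_R}^{\,E}$ by the frontier condition in $(E,\mathcal{S}_G(E))$, and intersecting with $\widetilde E$ concludes. The three axioms of Definition~\ref{svb} are now immediate: $p^{-1}(S)=\widetilde E|_S\in\mathcal{S}_G(\widetilde E)$; $p$ restricted to $\widetilde E|_S$ is the smooth vector bundle of Lemma~\ref{stratistrat}; and $\widetilde\mu$ restricts on $\bR\times\widetilde E|_S$ to that bundle's scalar multiplication, so it is a stratified morphism. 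The same template handles $\widetilde E/G\to M/G$: its pieces $\widetilde E|_S/G$ are strata of the canonical stratification $\mathcal{S}_G(E/G)$, and $\widetilde E/G$ is a saturated subspace of $E/G$ (the orbit map being open), so the frontier condition transfers verbatim; moreover, by Lemma~\ref{vbundnorm} a slice chart centred at a point of $S$ identifies $\widetilde E|_S$ over it with $(G/H)\times V^H\times W^H$, so $\widetilde E|_S/G\to S/G$ is locally modelled on $V^H\times W^H\to V^H$, hence a smooth vector bundle over $S/G$, with stratified scalar multiplication induced from $\widetilde\mu$.

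What remains, and what I expect to be the genuinely delicate point, is to equip $\widetilde E$ and $\widetilde E/G$ with differentiable structures for which $p$, $\overline p$, $\widetilde\mu$ and $\overline{\widetilde\mu}$ are smooth. For $\widetilde E$ the idea is to restrict charts of the manifold $E$: such a restriction is a topological embedding carrying each $\widetilde E|_S$ onto an embedded submanifold, so one only needs the image to be locally closed in $\bR^n$, that is, that $\widetilde E$ is locally a stratified subspace of $E$ in Pflaum's sense. Via Lemma~\ref{vbundnorm} this reduces to the model $E=G\times_H(V\times W)$, where $\widetilde E$ is cut out fibrewise by the linear conditions $w\in W^{H_v}$; because the stabilizers $H_v$ can jump with $v$, $\widetilde E$ need not be locally closed in $E$, so here one must either argue by hand in the local model or record only the ``(weak) differentiable stratified space'' structure such subsets always carry, as is already done for $TX$. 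For $\widetilde E/G$ I would instead use the Hilbert map machinery of Example~\ref{smoothorbits}: generators of $\bR[V\times W]^H$ give a proper embedding $(V\times W)/H\hookrightarrow\bR^k$ restricting to $\widetilde E|_U/G$, pairwise compatibility of these restricted charts follows from Schwarz's Theorem~\ref{schwarz} and Mather's splitting exactly as for $M/G$, and smoothness of $\overline p$ and $\overline{\widetilde\mu}$ then follows from the identification $C^\infty(\,\cdot\,/G)\cong C^\infty(\,\cdot\,)^G$ as in Proposition~\ref{easyprop}(ii). Checking local closedness of these chart images is the step I would spend the most care on, as it is the only place where the fine structure of the stabilizers and of the representations $V$ and $W$ genuinely enters.
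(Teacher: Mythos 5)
Your proposal is correct and follows essentially the same route as the paper: identify the pieces of $\mathcal{S}_G(\widetilde E)$ and $\mathcal{S}_G(\widetilde E/G)$ as subcollections of the orbit-type stratifications via Lemma~\ref{stratistrat}, use the local normal form of Lemma~\ref{vbundnorm} to see that $\widetilde E|_S/G\to S/G$ is locally modelled on $V^H\times W^H\to V^H$, and invoke Proposition~\ref{easyprop} for smoothness of the projections and scalar multiplications. Your extra care about local closedness of $\widetilde E$ in $E$ is well placed but does not change the argument; the paper resolves it exactly as you suggest, by recording only the (weak) differentiable stratified space structure obtained from restricting the atlases of $E$ and $E/G$.
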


\begin{proof}
	Note that by Lemma \ref{stratistrat}, it immediately follows that 
	\bge
	\mathcal{S}_G(\widetilde{E})=\{\tau\in\mathcal{S}_G(E) \ | \ \tau\text{ intersects the zero section of }E\text{ non-trivially}\}\subset \mathcal{S}_G(E)
	\ene
	and consequently we also have 
	\bge
	\mathcal{S}_G(\widetilde{E}/G)\subset \mathcal{S}_G(E/G).
	\ene
	Thus, $\mathcal{S}_G(\widetilde{E})$ and $\mathcal{S}_G(\widetilde{E}/G)$ are automatically stratifications of $\widetilde{E}$ and $\widetilde{E}/G$, respectively. Furthermore, by restricting the atlases on $E$ and $E/G$, $\widetilde{E}$ and $\widetilde{E}/G$ each inherit a (weak) differentiable stratified space structure. By Proposition \ref{easyprop}, the natural projections $\widetilde{E}\to M$ and $\widetilde{E}/G\to M/G$ are smooth and the natural monoid actions by $(\bR,\cdot)$ on both $\widetilde{E}$ and $\widetilde{E}/G$ are smooth as well. 
	
	All we have left to show is that the strata of $\widetilde{E}/G$ are smooth vector bundles over the strata of $M/G$. Choose an orbit-type stratum $S\in\mathcal{S}_G(M)$ of $M$ and a point $x\in S$ with $H=G_x$. Then, there exists $H$-representations $V$ and $W$ so that locally
	\bge
	\widetilde{E}|_S=G/H\times V^H\times W^H
	\ene
	and
	\bge
	S=G/H\times V^H,
	\ene
	with the obvious projection $\widetilde{E}|_S\to S$. Passing to the quotient, locally we have 
	\bge
	\widetilde{E}|_S/G=V^H\times W^H
	\ene
	and 
	\bge
	S/G=V^H.
	\ene
	Thus, $\widetilde{E}|_S/G\to S/G$ is a smooth vector bundle.
\end{proof}

\begin{rem}
	For a point of comparison, let $M$ be a smooth manifold and suppose that $G$ acts properly on $M$. Then, we get two natural stratified vector bundles associated to $M/G$. Indeed, first by taking the tangent bundle of $M$, we get a canonical $G$-equivariant vector bundle $\pi:TM\to M$. Using the above procedure, we get $\widetilde{TM}/G\to M/G$ as our first stratified vector bundle. On the other hand, $M/G$ is a Whitney A stratified space, and so we also have $T(M/G)$. In general, $\widetilde{TM}/G\neq T(M/G)$. Indeed, consider the example of $S^1$ acting on $\bR^2$. One can check that
	\bge
	\widetilde{T_x \bR^2}=
	\begin{cases}
		T_x \bR^2,\quad x\neq 0\\
		0,\quad x=0.
	\end{cases}
	\ene
	This implies
	\bge
	\dim(\widetilde{T \bR^2}/S^1)_{[x]}=
	\begin{cases}
		2,\quad [x]\neq [0]\\
		0,\quad [x]=[0].
	\end{cases}
	\ene
	On the other hand, for all $x\in \bR^2$
	\bge
	T_{[x]}(\bR^2/S^1)\cong T_x\bR^2/T_x(S^1\cdot x).
	\ene
	This in turn shows
	\bge
	\dim T_{[x]}(\bR^2/S^1)\cong
	\begin{cases}
		1,\quad [x]\neq [0]\\
		0,\quad [x]=[0].
	\end{cases}
	\ene
	Therefore, $(\widetilde{T \bR^2})/S^1$ and  $T(\bR^2/S^1)$ are not isomorphic as stratified vector bundles.
	
	However, these two constructions are not unrelated to one another. Indeed, let
	\begin{equation}
		A:=\bigcup_{x\in M} N_x(M,G\cdot x)
	\end{equation}
	denote the collection of all normal spaces to the orbits of the $G$-action on $M$. $A$ is not a vector bundle since the rank can change as we vary over $M$. However, for each $x\in M$, the fibre of $A$ over $x$ is a linear $G_x$ representation. Hence, we can take fibre-wise stabilizers as before. We then define a subset of $A$ closed under the $G$ action by
	\begin{equation}
		\widetilde{A}:=\bigcup_{x\in M} A_x^{G_x}.
	\end{equation}
	One can then verify that $\widetilde{A}$ together with the partition
	\begin{equation}
		\{\widetilde{A}|_S \ | \ S\in\mathcal{S}_G(M)\}
	\end{equation}
	is a stratified vector bundle over $M$. The strata are also closed under the $G$ action on $A$, hence $\widetilde{A}/G$ together with the partition
	\begin{equation}
		\{\widetilde{A}|_S/G \ | \ S\in\mathcal{S}_G(M)\}
	\end{equation}
	is also a stratified vector bundle, but now over $M/G$. By chasing diagrams, we get a canonical morphism of stratified vector bundles over $M/G$
	\begin{equation}\label{iso?}
		\widetilde{A}/G\to T(M/G).
	\end{equation}
	It is straightforward to check that this map is bijective and an isomorphism of smooth vector bundles when restricting to strata.
	
	\begin{conjecture}
		The map $\widetilde{A}/G\to T(M/G)$ in equation (\ref{iso?}) is an isomorphism of stratified vector bundles over $M/G$.
	\end{conjecture}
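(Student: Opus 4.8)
The plan is to reduce the conjecture to a single topological fact and then attack it with the slice theorem. Since the morphism $\Phi\colon\widetilde{A}/G\to T(M/G)$ is already bijective and restricts to an isomorphism of smooth vector bundles on each stratum, its set-theoretic inverse automatically preserves strata and is smooth stratum by stratum; so $\Phi$ is an isomorphism of stratified vector bundles precisely when $\Phi^{-1}$ is continuous, i.e.\ precisely when $\Phi$ is a homeomorphism. Everything therefore reduces to showing that the continuous bijection $\Phi$ is open.

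Openness is local on the base, so the next step is to reduce to a compact structure group. Given $[x_0]\in M/G$, pick a slice chart $U\cong G\times_H V$ about $x_0$, where $H=G_{x_0}$ is compact and $V$ is a linear $H$-representation, so that $U/G\cong V/H$. By the slice theorem the singular normal bundle restricts as $A|_U\cong G\times_H A^V$ with $A^V=\bigcup_v N_v(V,H\cdot v)$ the analogue of $A$ for the $H$-action on $V$, and since taking fibrewise isotropy-fixed points commutes with this identification, $\widetilde{A}|_U/G\cong\widetilde{A^V}/H$ over $V/H$, with $\Phi$ restricting to the corresponding map $\Phi_V\colon\widetilde{A^V}/H\to T(V/H)$. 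Thus it suffices to treat $G=H$ compact and $M=V$ a linear representation.

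For the compact case I would make everything Euclidean via Hilbert's theorem. Choose generators of $\bR[V]^H$ to get the proper map $p\colon V\to\bR^k$ of (\ref{hilbertmap}) and the proper embedding $\widetilde p\colon V/H\hookrightarrow\bR^k$ from Example \ref{smoothorbits}; differentiating $p\circ h=p$ shows $dp\colon TV\to\bR^k\times\bR^k$, $(x,\xi)\mapsto(p(x),Dp_x\xi)$, is $H$-invariant, and $T\widetilde p$ exhibits $T(V/H)$ as a topological subspace of $\bR^k\times\bR^k$ by the very definition of the stratified tangent bundle's topology. Using that $Dp_x$ kills orbit directions and is injective on $\widetilde{A^V}_x$ (which is a complement to $T_x(H\cdot x)$ inside the tangent space of the stratum through $x$), one checks that $dp$ descends to a continuous injection $\Psi\colon\widetilde{A^V}/H\to\bR^k\times\bR^k$ whose image equals that of $T\widetilde p$ and which satisfies $\Psi=T\widetilde p\circ\Phi_V$; hence $\Phi_V$ is a homeomorphism as soon as $\Psi$ is a topological embedding, for which it suffices that $dp|_{\widetilde{A^V}}\colon\widetilde{A^V}\to\bR^k\times\bR^k$ be proper. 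I would try to establish this properness by induction on $\dim V$ through the $H$-slice theorem --- splitting off the $H$-fixed directions $V^H$ and inducting on $(V^H)^\perp$ --- reducing to a model near the cone point and exploiting the homogeneity of $p$ there.

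This properness statement is the heart of the matter and is where I expect the genuine obstacle. Properness of $dp|_{\widetilde{A^V}}$ fails exactly when some sequence $(x_n,v_n)$ with $v_n\in\widetilde{A^V}_{x_n}$, $x_n\to x_0$, $Dp_{x_n}v_n$ bounded, has $|v_n|\to\infty$; normalising produces a unit vector $u\in\bigl(\lim_n\widetilde{A^V}_{x_n}\bigr)\cap\ker Dp_{x_0}$, so one really needs $Dp_{x_0}$ to remain injective on every Grassmannian limit of the fibres $\widetilde{A^V}_{x_n}$ --- a uniform, Whitney A type compatibility between the singular subbundle $\widetilde{A^V}$ and the Hilbert map that is far from automatic. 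Indeed, already for $H=S^1$ acting on $V=\bR^2$, in the canonical coordinate $s=\|x\|^2$ on $V/H$ the fibre identification $\widetilde{A^V}_x\to T_{[x]}(V/H)$ supplied by $Dp_x$ is multiplication by a factor proportional to $s$, which vanishes at the cone point, so one finds that $\Phi_V$ is continuous but $\Phi_V^{-1}$ is not and $dp|_{\widetilde{A^V}}$ is not proper. Clarifying whether this reflects a ``wrong'' choice of topology on the singular normal bundle $A$ --- and whether a corrected topology, or a restriction to Whitney A type situations as in Section 4, rescues the statement --- is the decisive point, and may be why the claim is left only as a conjecture.
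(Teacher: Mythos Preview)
The statement is labelled a \emph{conjecture} in the paper, not a theorem; the paper offers no proof. It only records that the map is bijective and a stratum-wise isomorphism, and then remarks that the VB-groupoid framework is probably the right setting in which to decide the question. So there is no ``paper's own proof'' to compare your attempt to.

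Your reduction is sound and goes well beyond anything the paper does: openness of $\Phi$ is the only missing ingredient, the slice theorem reduces to a compact isotropy group acting linearly, and the Hilbert embedding $\widetilde p$ pins down the topology on $T(V/H)$ exactly as in Example~\ref{smoothorbits}. The serious content is in your last paragraph, and it is not a gap in your argument but a genuine obstruction to the conjecture itself. In the $S^1$ on $\bR^2$ model, with $\widetilde{A}$ topologised as a subspace of $TM$ via a Riemannian normal complement, one computes $\Phi$ in coordinates as $(r,c)\mapsto(r^2,2rc)$ over the open stratum; the sequence $(s_n,t_n)=(1/n^2,1/n)\to(0,0)$ in $T(V/H)$ then has $\Phi^{-1}(s_n,t_n)=(1/n,1/2)\not\to *$, so $\Phi^{-1}$ is not continuous and the conjecture, read literally with that topology, is false.

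The underlying issue is that the paper never specifies a topology on $A=\bigcup_x N_x$ or on $\widetilde{A}$; it only asserts that $\widetilde{A}$ ``is'' a stratified vector bundle. Different reasonable choices (quotient topology from $TM$, subspace topology via a metric splitting, or an induced topology pulled back along $\Phi$ itself) give different answers. Your diagnosis that the truth of the conjecture hinges on this unspecified choice, and your suggestion that a Whitney~A style compatibility or the VB-groupoid picture is what is really needed, match the spirit of the paper's closing remark and are the correct takeaway.
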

	
	The construction of $\widetilde{A}/G$ can be generalized to more general objects called VB-groupoids. We believe this is the proper framework for determining if the map in equation (\ref{iso?}) is an isomorphism of stratified vector bundles.
\end{rem}

\section{Linear Functors and Stratified Vector Bundles}

We now have a pretty large class of examples of stratified vector bundles. We would now like to discuss ways of getting even more examples: applying linear functors. But first, a definition.

\begin{defs}
	Let $\mathfrak{Vect}_\bR$ denote the category of finite-dimensional $\bR$-vector spaces. A {\bf linear functor} is a  functor $F:\mathfrak{Vect}_\bR\to \mathfrak{Vect}_\bR$ such that for all finite-dimensional $\bR$-vector spaces $V$ and $W$, the natural map
	\bge
	\Hom(V,W)\to \Hom(F(V),F(W))
	\ene
	is linear. 
\end{defs}

If $\pi:E\to M$ is a smooth vector bundle and $F$ is a linear functor, we can apply $F$ to $E$ fibre-wise to obtain a set-theoretic vector bundle $\pi_F:F(E)\to M$. To topologize $F(E)$ and make it into a smooth vector bundle, we make use of the local triviality of $E$. Recall, we can choose an open cover $\{U_i\}$ of $M$ together with isomorphisms of vector bundles
\bge
\phi_i:\pi^{-1}(U_i)\to U_i\times \bR^n.
\ene
Let $g_{ij}\in C^\infty(U_i\cap U_j,\text{GL}_n(\bR))$ be the resulting transition functions defined by
\bge
g_{ij}\phi_j=\phi_i
\ene
on $U_i\cap U_j$. We can then reconstruct $E$ by
\bge
E\cong \bigg(\bigsqcup_i U_i\times\bR^n\bigg)/\sim
\ene
where $(x,v)\sim (x,g_{ij}(x)v)$. The upshot of this digression is that we can now easily topologize $F(E)$. Indeed, we can apply $F$ to the transition functions to get smooth maps
\bge
F(g_{ij})\in C^\infty(U_i\cap U_j,\text{GL}(F(\bR^n)))
\ene
and declare
\bge
F(E)\cong \bigg(\bigsqcup_i U_i\times F(\bR^n)\bigg)/\sim,
\ene
where $(x,v)\sim (x,F(g_{ij}(x))v)$. 

The key to this whole discussion was the local triviality of $E$. The question is now, how can we apply a linear functor to a stratified vector bundle when it need not be locally trivial? That is, if $p:A\to X$ is a stratified vector bundle and $F$ is a linear functor, then how can we topologize 
\begin{equation}\label{functstrat}
	F(A):=\bigcup_{x\in X} F(A_x)
\end{equation}
so that (1) $F(A)$ is a stratified space and (2) the natural map $p_F:F(A)\to X$ is a stratified vector bundle?

If we were to follow the above procedure and demand that the stratified vector bundle $p:A\to X$ to be locally isomorphic to a trivial stratified vector bundle in the sense of Example \ref{trivbund}, then this would be the same thing as imposing that $p:A\to X$ be a vector bundle over a stratified space. This is obviously too strong a condition, as we would be unable to apply linear functors to objects like stratified tangent bundles. 

One remedy is to weaken what local triviality means. Instead of asking that there exists a local isomorphism to a trivial stratified vector bundle, we could ask that there only exist an embedding in a suitable sense. Many of the definitions of this section will be modelled on the construction of the stratified tangent bundle of a Whitney A stratified space.

\subsection{Injective Stratified Vector Bundles}
To begin, we develop a notion of an \enquote{injective structure}. This kind of structure on a stratified vector bundle will allow us to topologize the result of applying a linear functor fibre-wise as in Equation (\ref{functstrat}). Note that this definition resembles Definition \ref{smthdef} very closely.  

\begin{defs}
	Let $p:A\to X$ be a stratified vector bundle. 
	\begin{itemize}
		\item[(i)] A {\bf trivialization} consists of an injective stratified vector bundle morphism $\phi:A\into X\times \bR^n$ for some $n$ such that  $\phi$ is a topological embedding. 
		\item[(ii)] Suppose $\phi:p^{-1}(U)\to U\times \bR^n$ and $\psi:p^{-1}(V)\to V\times \bR^m$ are two trivializations, and let $N=\text{max}\{n,m\}$.We say they are {\bf compatible} if for any $x\in U\cap V$, there exists open $x\in W\subset U\cap V$ and an isomorphism of vector bundles $H:W\times \bR^N\to W\times \bR^N$ such that the diagram commutes
		\bge
		\begin{tikzcd}
			W\times \bR^N\arrow[rr,"H"] && W\times \bR^N\\
			W\times \bR^n\arrow[u,"\iota_n^N"] && W\times \bR^m\arrow[u,"\iota_m^N"]\\
			&p^{-1}(W)\arrow[ul,swap,"\phi"]\arrow[ur,"\psi"]&
		\end{tikzcd}
		\ene
		\item[(iii)] An {\bf atlas of local trivializations} consists of a collection of compatible local trivializations $\{\phi_i:p^{-1}(U_i)\to U_i\times \bR^{n_i}\}$ such that $\{U_i\}$ is an open cover of $X$. A maximal atlas of local trivializations will be called an {\bf injective structure} on $p:A\to X$. 
	\end{itemize}
\end{defs}

\begin{egs}
	Let $X$ be a Whitney A stratified space. Let $\mathcal{A}=\{\phi_i:U_i\to \bR^{n_i}\}$ be an atlas of $X$. Observe that by construction we get a family of local trivializations by differentiation
	\bge
	T\mathcal{A}:=\{T\phi_i:TU_i\to \phi_i(U_i)\times \bR^{n_i}\}.
	\ene
	The compatibility of the charts automatically implies compatibility of the local trivializations. Hence, $TX\to X$ is canonically an injective stratified vector bundle. 
\end{egs}

\begin{egs}
	Let $(M,\Sigma,\mathcal{F})$ be a stratified foliation and let $\mathcal{A}=\{\phi_i:U_i\to V_i\subset \bR^n\}$ be a smooth atlas of the manifold $M$ in the usual sense. Then it immediately follows that
	\bge
	T\mathcal{A}=\{T\phi_i:TU_i\to \phi_i(U_i)\times \bR^n\}
	\ene
	forms an injective structure. 
\end{egs}

Now, suppose $p:A\to X$ is an injective stratified vector bundle and $F$ a covariant linear functor. To topologize $F(A)$ as in equation (\ref{functstrat}),  consider a local trivialization $\phi:p^{-1}(U)\to U\times \bR^k$. Applying $F$ fibre-wise, we get an injective map
\begin{equation}\label{functtriv}
	F(\phi):p_{F}^{-1}(U)\to U\times F(\bR^k).
\end{equation}
Equip $F(A)$ with the coarsest topology so that the projection $p_F:F(A)\to X$ and the application of $F$ to all local trivializations $F(\phi)$ are continuous. One readily checks that $F(A)$ together with the partition
\begin{equation}\label{functpart}
	\Sigma_{F(A)}:=\{F(A)|_S \ | \ S\in \Sigma_X\},
\end{equation}
automatically satisfies most of the conditions of being an injective stratified vector bundle, except for perhaps the frontier condition. We will now need to impose further conditions on both the stratified vector bundle $p:A\to X$ and the linear functor $F$.

\subsection{Orthogonalizable Linear Functors and Grassmannians}
One key to showing $F(A)$ satisfies the frontier condition is to require that the linear functor $F$ induce a continuous map on Grassmannians in a suitable sense. As we will see, Grassmannians can be seen as subspaces of finite dimensional vector spaces by identifying a subspace with its orthogonal projection. We will now consider a class of functors which preserve this identification.

\begin{defs}
	Let $\mathfrak{Inn}_\bR$ denote the category whose objects are finite-dimensional real inner product spaces and whose morphisms are linear maps. A covariant functor $Q:\mathfrak{Inn}_\bR\to \mathfrak{Inn}_\bR$ is said to be {\bf orthogonal} if the following two conditions hold.
	\begin{itemize}
		\item[(i)]  For all inner product spaces $V$ and $W$, the natural map
		\bge
		\Hom(V,W)\to \Hom(Q(V),Q(W))
		\ene
		is linear. 
		\item[(ii)] If $V$ is an inner product space, $W\subset V$ a subspace, and $P_W:V\to V$ is the orthogonal projection onto $W$, then 
		\bge
		Q(P_W)=P_{Q(W)}.
		\ene
	\end{itemize}
	A linear functor $F$ will be said to be {\bf orthogonalizable} if there exists an orthogonal functor $Q$ such that the following diagram commutes:
	\bge
	\begin{tikzcd}
		\mathfrak{Inn}_\bR\arrow[r,"Q"]\arrow[d] & \mathfrak{Inn}_\bR\arrow[d] \\
		\mathfrak{Vect}_\bR\arrow[r,"F"] & \mathfrak{Vect}_\bR
	\end{tikzcd}
	\ene
	where each vertical arrow is the forgetful functor $\mathfrak{Inn}_\bR\to \mathfrak{Vect}_\bR$. Call $Q$ the {\bf orthogonalization} of $F$.
\end{defs}

\begin{egs}
	Pretty much all standard functors are orthogonalizable. Direct sums, tensor products, symmetric products, and wedge products are all orthogonalizable. For example, given an inner product space $(V,\bra\cdot,\cdot\ket)$, $\wedge^n V$ has the natural inner product
	\bge
	\bra v_1\wedge\cdots\wedge v_n,w_1\wedge\cdots\wedge w_n\ket:=\det(\bra v_i,w_j\ket).
	\ene
	Now suppose $W\subset V$ is a subspace and $P_W:V\to V$ is the orthogonal projection. Then I claim 
	\bge
	\wedge^nP_W=P_{\wedge^n W}.
	\ene
	We clearly see that the image of $\wedge^nP_W$ is $\wedge^n W$ and $(\wedge^nP_W)^2=\wedge^nP_W$. So, all we need to show is that $\wedge^nP_W$ is self-adjoint. For this, let $v=\wedge^n v_i$ and $u=\wedge^nu_j$ be elements of $\wedge^n V$. Then,
	\begin{align*}
		\bra \wedge^nP_W(v),u\ket=\det(\bra P_W(v_i),u_j\ket)=\det(\bra v_i,P_W(u_j)\ket)=\bra v,P_W(u)\ket.
	\end{align*} 
	Hence, $\wedge^n P_W$ is self-adjoint and  thus $\wedge^n P_W=P_{\wedge^n W}$.
\end{egs}

\begin{rem}
	Suppose $V$ is a finite dimensional real vector space and suppose $k$ is an integer such that $0\leq k\leq \dim V$. We define the the {\bf Grassmannian of $k$-dimensional subsets of $V$} by
	\begin{equation}
		G_k(V):=\{W\subset V \ | \ W\text{ linear subspace of }V\text{ and }\dim W=k\}.
	\end{equation}
	To topologize $G_k(V)$ choose an inner product on $V$. Then, for each $W\in G_k(V)$, let $P_W:V\to V$ denote the orthogonal projection onto $W$ with respect to the chosen inner product. This defines an injective map
	\bge
	G_k(V)\into \End(V);\quad W\mapsto P_W. 
	\ene
	Equip $G_k(V)$ with the subspace topology. Observe that the orthogonal group of $V$, $O(V)$, acts transitively on $G_k(V)$ via
	\bge
	O(V)\times G_k(V)\to G_k(V);\quad (T,P_W)\mapsto TP_WT^{-1}.
	\ene
	This identifies $G_k(V)$ with an orbit of a smooth action of a compact group, $O(V)$, on $\End(V)$. Hence, $G_k(V)$ is a smooth compact manifold.
	
	Note that the topology and smooth structure on $G_k(V)$ do not depend on choices. That is, different choices of inner product on $V$ and different choice of element $W\in G_k(V)$ produce equal topologies and smooth structures. 
\end{rem}

\begin{lem}
	Let $V$ be a finite-dimensional real vector space, $k\leq \dim V$, and $F:\mathfrak{Vect}_\bR\to \mathfrak{Vect}_\bR$ a covariant orthogonalizable linear functor. Write 
	\bge
	F(k):=\dim F(\bR^k).
	\ene
	Then, the natural map
	\bge
	F:G_k(V)\to G_{F(k)}(F(V));\quad W\mapsto F(W)
	\ene
	is smooth.
\end{lem}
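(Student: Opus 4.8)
The plan is to exhibit the map $F\colon G_k(V)\to G_{F(k)}(F(V))$ as the restriction, under the projection embeddings of the Remark, of a single \emph{linear} map $\End(V)\to\End(F(V))$, and then to conclude from the fact that both Grassmannians sit as embedded submanifolds of the respective endomorphism spaces. For the bookkeeping I would fix an orthogonalization $Q$ of $F$, fix an inner product on $V$, and equip $F(V)=Q(V)$ with the inner product coming from $Q$; by the independence clause at the end of the Remark this choice does not affect the smooth structure on $G_{F(k)}(F(V))$. The map is well-defined: the inclusion $\iota_W\colon W\hookrightarrow V$ is split on the left by the co-restriction of $P_W$ (since $P_W|_W=\id_W$), so $F(\iota_W)$ is split injective and $F(W)$ is a genuine subspace of $F(V)$, of dimension $\dim F(\bR^k)=F(k)$ because functors preserve isomorphisms. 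With the chosen inner products, the Remark furnishes smooth embeddings $G_k(V)\hookrightarrow\End(V)$, $W\mapsto P_W$, and $G_{F(k)}(F(V))\hookrightarrow\End(F(V))$, $U\mapsto P_U$, each onto an embedded compact submanifold.

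Next I would introduce the linear map $L\colon\End(V)\to\End(F(V))$, $T\mapsto F(T)$: this is precisely the natural map $\Hom(V,V)\to\Hom(F(V),F(V))$, which is linear by the definition of a linear functor, hence smooth. The crucial identity is $L(P_W)=P_{F(W)}$ for every $W\in G_k(V)$. To see this: the square defining ``orthogonalizable'' gives $F(P_W)=Q(P_W)$ as endomorphisms of $F(V)$; the orthogonality axiom for $Q$ gives $Q(P_W)=P_{Q(W)}$; and $Q(W)$ and $F(W)$ are the same subspace of $F(V)$, being the common image of $Q(\iota_W)=F(\iota_W)$. Hence $L$ carries the submanifold $\{P_W\mid W\in G_k(V)\}$ of $\End(V)$ into the submanifold $\{P_U\mid U\in G_{F(k)}(F(V))\}$ of $\End(F(V))$, and under the projection identifications this restricted map is exactly $W\mapsto F(W)$.

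Finally, I would assemble the commuting square
\bge
\begin{tikzcd}
G_k(V)\arrow[r,"F"]\arrow[d] & G_{F(k)}(F(V))\arrow[d]\\
\End(V)\arrow[r,"L"] & \End(F(V))
\end{tikzcd}
\ene
whose vertical arrows are the projection embeddings $W\mapsto P_W$ and $U\mapsto P_U$. The left vertical arrow is a diffeomorphism onto an embedded submanifold, $L$ is smooth, and $L$ sends that submanifold into the embedded submanifold $G_{F(k)}(F(V))\subset\End(F(V))$; since a smooth map into an ambient manifold whose image lies in an embedded submanifold is smooth as a map into that submanifold, $F$ is smooth. The only genuinely non-routine step is the identity $L(P_W)=P_{F(W)}$: this is exactly where the orthogonality axiom is used, being what forces the linear map $L$ to respect the Grassmannian embeddings, and the one delicate point there — which inner product to put on $F(V)$ — is dispatched by the independence-of-choices clause of the Remark.
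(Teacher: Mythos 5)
Your proposal is correct and follows essentially the same route as the paper's proof: embed both Grassmannians into the endomorphism spaces via orthogonal projections, observe that the induced map $\End(V)\to\End(F(V))$ is linear hence smooth, and use the orthogonality axiom to get $F(P_W)=P_{F(W)}$ so that the square commutes. Your write-up merely supplies some details the paper leaves implicit (well-definedness of $F(W)$, the identification $Q(W)=F(W)$, and the restriction-to-embedded-submanifold step), so no substantive difference.
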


\begin{proof}
	Choose an orthogonalization $Q$ of $F$ and choose an inner product on $V$. Then, $F(V)$ has a natural inner product structure induced by $Q$. Hence, we have inclusions
	\bge
	G_k(V)\into \End(V)
	\ene
	and
	\bge
	G_{F(k)}(F(V))\into \End(F(V)).
	\ene
	However, since $F$ is a linear functor, we do have a natural linear map
	\bge
	F:\End(V)\to \End(F(V)).
	\ene
	It is then easy to see that the diagram commutes
	\bge
	\begin{tikzcd}
		\End(V)\arrow[r] & \End(F(V))\\
		G_k(V)\arrow[u]\arrow[r] & G_{F(k)}(F(V))\arrow[u]
	\end{tikzcd}
	\ene
	since $F$ is orthogonalizable, which implies
	\bge
	F(P_W)=P_{F(W)}
	\ene
	for all $W\in G_k(V)$. The top map is linear, hence smooth, which implies the bottom map is smooth.
\end{proof}

\subsection{The Whitney A Condition}
To apply linear functors to injective stratified vector bundles, we will need to ensure that the frontier condition holds. A version of the Whitney A condition  for differentiable stratified spaces (see Definition \ref{whitdef}) can be easily adapted to the setting of injective stratified vector bundles. This can be used to then guarantee that the frontier condition survives under the image of a linear functor.

\begin{defs}
	Let $p:A\to X$ be an injective stratified vector bundle. A pair of strata $(S,R)$ of $X$ with $S\leq R$ are said to be {\bf Whitney A with respect to $p:A\to X$} if for each $x_0\in S$ and every local trivialization $\phi:p^{-1}(U)\to U\times \bR^k$ about $x_0$, if there exists
	\begin{itemize}
		\item[(i)]  a sequence $\{x_n\}\subset R$ converging to $x_0$, and
		\item[(ii)] a subspace $W\subset \{x_0\}\times \bR^n$ such that $\phi(A_{x_n})$ converges to $W$ in $X\times G_r(\bR^k)$, where $r=\text{rank}(A|_R)$,
	\end{itemize}
	then $\phi(A_{x_0})\subset W$. If every pair of strata $(S,R)$ with $S\leq R$ is Whitney A, say $p:A\to X$ is a {\bf Whitney A stratified vector bundle}.
\end{defs}

\begin{rem}
	\begin{itemize}
		\item[(i)] Observe that we have imposed no smoothness requirements on either $X$ or $A$. Let $X$ be a differentiable stratified space which is not Whitney A (for example, the modified Whitney's umbrella in Example 1.4.7 from \cite{pflaum_analytic_2001}) and $A=X\times \{0\}$ be the trivial rank $0$ vector bundle over $X$ with obvious projection $p:A\to X$. Neither $A$ nor $X$ is a Whitney A differentiable stratified space, but $p:A\to X$ {\bf is} a Whitney A stratified vector bundle. 
		\item[(ii)] As was noted earlier, it's very natural to extend the notion of Whitney A to the realm of stratified vector bundles. However, it is not quite as natural to extend Whitney B as that condition relies on the fact that we can identify $\bR^n$ with with any of its tangent spaces $T_x \bR^n$. By possibly equipping the base space $X$ with a smooth structure and $p:A\to X$ a compatible injective structure in some sense (perhaps along the lines of Scarlett's definition of a smooth stratified vector bundle \cite{scarlett_smooth_2023}), it should be possible to state what Whitney B means locally. However, it would probably be quite nontrivial to show that such a definition is coordinate independent.
	\end{itemize}
\end{rem}

\begin{egs}
	Any trivial stratified vector bundle $pr_1:X\times \bR^k\to X$ is clearly Whitney A. 
\end{egs}

\begin{egs}
	Let $X$ be a Whitney A stratified space. Clearly then $TX\to X$ is a Whitney A stratified vector bundle. 
\end{egs}

\begin{lem}
	Let $p:A\to X$ be an injective stratified vector bundle. Suppose for every $x\in X$ and $a\in A_x$, there exists neighbourhood $U\subset X$ of $x$ and a section $s:U\to A$ such that $s(x)=a$. Then $p:A\to X$ is Whitney A.
\end{lem}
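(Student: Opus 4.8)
The plan is to reduce the Whitney A condition for $p:A\to X$ to the elementary fact that the ``incidence variety'' $\{(v,W):v\in W\}$ inside $\bR^k\times G_r(\bR^k)$ is closed, and then to feed the hypothesized local sections into it. Fix a pair of strata $S\leq R$, a point $x_0\in S$, a local trivialization $\phi:p^{-1}(U)\to U\times\bR^k$ about $x_0$, a sequence $\{x_n\}\subset R$ with $x_n\to x_0$, and a subspace $W$ with $\phi(A_{x_n})\to W$ in $X\times G_r(\bR^k)$, where $r=\mathrm{rank}(A|_R)$. Since $\phi$ is a topological embedding which restricts on each stratum to a vector bundle morphism, it is injective and $\bR$-linear on each fibre; hence $\phi(A_{x_n})$ is a genuine $r$-dimensional subspace of $\{x_n\}\times\bR^k$ and $\phi(A_{x_0})$ is a subspace of $\{x_0\}\times\bR^k$. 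Writing $\phi(A_{x_n})=\{x_n\}\times V_n$ and $W=\{x_0\}\times W_0$, the convergence in $X\times G_r(\bR^k)$ just says $x_n\to x_0$ together with $V_n\to W_0$ in $G_r(\bR^k)$. So the statement to be proved becomes: for every $a\in A_{x_0}$, the vector $w:=\mathrm{pr}_2(\phi(a))\in\bR^k$ lies in $W_0$.

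Next I would record the closedness fact. If $V_n\to W_0$ in $G_r(\bR^k)$ and $v_n\in V_n$ with $v_n\to v$ in $\bR^k$, then $v\in W_0$. This is immediate from the way $G_r(\bR^k)$ is topologized: $V_n\to W_0$ means $P_{V_n}\to P_{W_0}$ in $\End(\bR^k)$, so $v_n=P_{V_n}(v_n)\to P_{W_0}(v)$ by continuity of evaluation, whence $P_{W_0}(v)=v$, i.e. $v\in W_0$.

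Now I would invoke the hypothesis. Given $a\in A_{x_0}$, pick a neighbourhood $U'\subset U$ of $x_0$ and a section $s:U'\to A$ with $s(x_0)=a$. For $n$ large, $x_n\in U'$ and $s(x_n)\in A_{x_n}$, so $\mathrm{pr}_2(\phi(s(x_n)))\in V_n$. The composite $\phi\circ s:U'\to U\times\bR^k$ is continuous, because $s$ is continuous with image in $p^{-1}(U)$ and $\phi$ is a topological embedding; hence $\phi(s(x_n))\to\phi(s(x_0))=\phi(a)$ and in particular $\mathrm{pr}_2(\phi(s(x_n)))\to w$. Combined with $V_n\to W_0$, the closedness fact gives $w\in W_0$. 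As $a\in A_{x_0}$ was arbitrary, $\phi(A_{x_0})\subset W$; and as the trivialization, the base point, and the pair $(S,R)$ were arbitrary, $p:A\to X$ is Whitney A.

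I do not expect a serious obstacle: the content is essentially the same as the argument that a distribution spanned by global vector fields satisfies the frontier condition (compare the proof in the Singular Foliations section). The only points requiring a little care are the bookkeeping with base points — that convergence in $X\times G_r(\bR^k)$ really does decouple into $x_n\to x_0$ and $V_n\to W_0$ — and the observation that $\phi$, though only assumed to be a topological embedding, is automatically fibrewise linear and injective, so that the dimension count defining $G_r$ is correct; both are immediate from the definitions.
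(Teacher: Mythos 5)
Your proposal is correct and follows essentially the same route as the paper: reduce to a single (local) trivialization, extend $a\in A_{x_0}$ to a continuous local section, and use the convergence of the orthogonal projections $P_{V_n}\to P_{W_0}$ together with $P_{V_n}(v_n)=v_n$ to conclude $P_{W_0}(\lim v_n)=\lim v_n$. Your isolation of the ``incidence variety is closed'' fact and the explicit decoupling of the convergence in $X\times G_r(\bR^k)$ are just slightly more careful bookkeeping of the identical argument.
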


\begin{proof}
	Since Whitney A is a local condition, it suffices to assume that there exists a global trivialization $A\subset X\times \bR^k$ for some $k$. Suppose $S,R\subset X$ are strata such that $S\subset \overline{R}$, $x_0\in S$, $\{x_n\}\subset R$ a sequence converging to $x_0$, and $W\subset \{x_0\}\times \bR^k$ a subspace so that
	\bge
	\lim A_{x_n}=W
	\ene
	in the $\text{rank}(A|_R)$ Grassmannian of $X\times \bR^k$. 
	
	We want to show that $A_{x_0}\subset W$. To do this, fix $(x_0,a_0)\in A_{x_0}$ and a section $s:X\to A$ satisfying $s(x_0)=(x_0,a_0)$. Write $s(x)=(x,f(x))$ for some continuous map $f:X\to \bR^k$. 
	
	Let $P_W:\bR^k\to \bR^k$ be the orthogonal projection onto $W$ and $P_{A_{x_n}}$ the orthogonal projection onto $A_{x_n}$. Then, by definition
	\bge
	\lim_{n\to \infty}\|P_{A_{x_n}}-P_W\|=0,
	\ene
	where $\|\cdot\|$ is the operator norm. However, we also have $\lim_{n\to \infty}f(x_n)=a_0$. Putting these two facts together, we obtain:
	\bge
	\lim_{n\to \infty}(f(x_n)-P_W(f(x_n)))=a_0-P_W(a_0)
	\ene
	in $\bR^k$. Since $P_{A_{x_n}}(f(x_n))=f(x_n)$, 
	\bge
	\lim_{n\to \infty}\|f(x_n)-P_W(f(x_n))\|=0,
	\ene
	we finally obtain $a_0=P_W(a_0)\in W$. 
\end{proof}

\begin{cor}
	The stratified vector bundles defined by stratified foliations $(M,\Sigma,\mathcal{F})$ are Whitney A.
\end{cor}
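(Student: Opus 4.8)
The plan is to invoke the Lemma immediately preceding this corollary, which reduces the Whitney A property of an injective stratified vector bundle to the existence of local sections through every point of the total space. So the work splits into two checks: that $T\mathcal{F}\to M$ is an injective stratified vector bundle, and that it admits enough local sections.

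First I would recall from the earlier example that $T\mathcal{F}\to M$ does carry an injective structure: picking any ordinary smooth atlas $\{\phi_i:U_i\to V_i\subset\bR^n\}$ of the manifold $M$, the family of differentials $\{T\phi_i:T\mathcal{F}|_{U_i}\into \phi_i(U_i)\times\bR^n\}$ is a compatible atlas of local trivializations, since compatibility of the $\phi_i$ forces compatibility of the $T\phi_i$. Hence the standing hypothesis of the Lemma is satisfied.

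Next I would verify the local-section condition. Fix $x\in M$ and $a\in (T\mathcal{F})_x=T_xL_x$. Because $\mathcal{F}$ is a Stefan-Sussmann singular foliation, $T\mathcal{F}$ is by definition a smooth singular distribution, so there exist an open neighbourhood $U\subset M$ of $x$ and a vector field $X\in\mfX(U)$ with $X_x=a$ and $X_y\in(T\mathcal{F})_y$ for all $y\in U$. This $X$ is exactly a section of $T\mathcal{F}\to M$ over $U$ hitting $a$: it is smooth, hence continuous, as a map into $TM$; its image lies in $T\mathcal{F}$; since $T\mathcal{F}$ carries the subspace topology inherited from $TM$, the corestriction $X:U\to T\mathcal{F}$ is still continuous; and $p\circ X=\pi|_{T\mathcal{F}}\circ X=\id_U$. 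Thus $s:=X$ is the required local section with $s(x)=a$.

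With both hypotheses in hand, the Lemma yields at once that $T\mathcal{F}\to M$ is a Whitney A stratified vector bundle, for every stratified foliation $(M,\Sigma,\mathcal{F})$. I do not expect any genuine obstacle: the only subtle point is that the distribution-defining vector field is continuous into $T\mathcal{F}$ equipped with the subspace topology, and that is automatic from the construction of $T\mathcal{F}$ as a subspace of $TM$.
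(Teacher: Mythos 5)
Your argument is correct and is precisely the route the paper intends: the corollary is stated without proof immediately after the lemma on local sections, with the injective structure on $T\mathcal{F}$ coming from the earlier example and the local sections coming straight from the definition of a smooth singular distribution. Nothing is missing.
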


Finally, now that we have a host of examples, let us put everything together  regarding when we can apply a linear functor to a stratified vector bundle.

\begin{thm}\label{functonspace}
	Let $p:(A,\Sigma_A)\to (X,\Sigma_X)$ be a Whitney A stratified vector bundle and $F$ a covariant orthogonalizable linear functor. Then, $p_F:F(A)\to X$ is canonically a Whitney A stratified vector bundle.
\end{thm}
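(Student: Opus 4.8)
\emph{Proof idea.} The plan is to verify the clauses defining a Whitney A stratified vector bundle for $p_F:F(A)\to X$ in turn, with the partition $\Sigma_{F(A)}$ of Equation (\ref{functpart}) and the topology already fixed (the coarsest making $p_F$ and every $F(\phi)$ continuous). As noted just after Equation (\ref{functpart}), nearly all of this is bookkeeping identical to the construction of the stratified tangent bundle of a Whitney A space, and I would dispatch it quickly: each $F(A)|_S=\bigcup_{x\in S}F(A_x)$ is the total space of the smooth vector bundle obtained by applying $F$ fibre-wise to $A|_S\to S$ (using any ordinary atlas of local trivializations of the smooth bundle $A|_S$), hence a topological manifold carrying a canonical smooth structure; the maps $F(\phi_i):p_F^{-1}(U_i)\to U_i\times F(\bR^{n_i})$ are injective stratified vector bundle morphisms, topological embeddings, and mutually compatible (apply $F$ to the gluing isomorphisms of the $\phi_i$); $p_F$ is a stratified morphism and scalar multiplication on $F(A)$ is a stratified morphism, being in each $F(\phi_i)$-chart a restriction of the structure on $U_i\times F(\bR^{n_i})$; and the Hausdorff, second countability, paracompactness and local finiteness requirements are inherited from $X$ via $p_F$ and the fibre-wise-injective embeddings $F(\phi_i)$. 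The whole content of the theorem then lies in two points: the frontier condition for $\Sigma_{F(A)}$, and the Whitney A condition for $p_F$; I would deduce both from one local statement.

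The local statement is this. Fix strata $S\subset\overline R$ of $X$, a point $x_0\in S$, and a local trivialization $\phi:p^{-1}(U)\to U\times\bR^k$ about $x_0$; for $y\in U$ set $V_y:=\phi_y(A_y)\subset\bR^k$, the image of the fibre, so $\phi(A_y)=\{y\}\times V_y$, and since $\phi_y$ is a linear isomorphism onto $V_y$ while orthogonalizability makes $F$ send linear isomorphisms and subspace inclusions to the same, $F(\phi)(F(A_y))=\{y\}\times F(V_y)$ with $F(V_y)\subseteq F(\bR^k)$ a subspace of dimension $F(r):=\dim F(\bR^r)$ whenever $\dim V_y=r:=\text{rank}(A|_R)$. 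Let $\{x_n\}\subset R$ satisfy $x_n\to x_0$. The $V_{x_n}$ lie in the compact Grassmannian $G_r(\bR^k)$, so after passing to a subsequence $V_{x_n}\to W$ there for some $W$. Then: \textbf{(a)} the Whitney A hypothesis on $p:A\to X$ gives $V_{x_0}\subseteq W$; \textbf{(b)} by the Lemma that $F$ induces a smooth, hence continuous, map $G_r(\bR^k)\to G_{F(r)}(F(\bR^k))$, $F(V_{x_n})\to F(W)$, equivalently $P_{F(V_{x_n})}\to P_{F(W)}$ in operator norm; \textbf{(c)} $V_{x_0}\subseteq W$ implies $F(V_{x_0})\subseteq F(W)$, since writing $P$ for orthogonal projections and $Q$ for the orthogonalization of $F$, the inclusion gives $P_{V_{x_0}}=P_W\circ P_{V_{x_0}}$, and applying $Q$ with $Q(P_{V_{x_0}})=P_{F(V_{x_0})}$ and $Q(P_W)=P_{F(W)}$ yields $P_{F(V_{x_0})}=P_{F(W)}\circ P_{F(V_{x_0})}$, i.e. $F(V_{x_0})\subseteq F(W)$.

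Granting this, the two remaining axioms follow. For Whitney A of $p_F$: if $x_n\to x_0$ in $R$ and $F(V_{x_n})\to\mathcal W$ in $G_{F(r)}(F(\bR^k))$, pass to the subsequence of \textbf{(a)}--\textbf{(c)}, along which $F(V_{x_n})\to F(W)$, so $\mathcal W=F(W)$ by Hausdorffness of the Grassmannian and hence $F(V_{x_0})\subseteq F(W)=\mathcal W$ by \textbf{(c)}; chart-independence (that the same conclusion holds for every trivialization in the injective structure) I would argue as for differentiable stratified spaces. For the frontier condition: if $F(A)|_S\cap\overline{F(A)|_R}\neq\emptyset$ then $S\cap\overline R\neq\emptyset$ by continuity of $p_F$, so $S\subset\overline R$; fixing $\xi\in F(A_{x_0})$ with $x_0\in S$, a trivialization $\phi$ about $x_0$, and writing $F(\phi)(\xi)=(x_0,\bar\eta)$ with $\bar\eta\in F(V_{x_0})$, choose $x_n\to x_0$ in $R$ and pass to the subsequence of \textbf{(a)}--\textbf{(c)}; then $\bar\eta\in F(W)$ by \textbf{(c)}, so $\bar\eta_n:=P_{F(V_{x_n})}(\bar\eta)\in F(V_{x_n})$ has $\bar\eta_n\to P_{F(W)}(\bar\eta)=\bar\eta$ by \textbf{(b)}, whence $(x_n,\bar\eta_n)\in F(\phi)(F(A)|_R)$ converges to $F(\phi)(\xi)$ and, $F(\phi)$ being a topological embedding and $F(A)$ carrying the initial topology, the corresponding points of $F(A)|_R$ converge to $\xi$. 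Thus $F(A)|_S\subset\overline{F(A)|_R}$, which is the frontier condition.

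I expect the main obstacle to be step \textbf{(c)}: a general linear functor need not carry a subspace inclusion to a subspace inclusion (it need not preserve monomorphisms), so the orthogonalizability hypothesis is essential there, and the projection identity above is the cleanest way to exploit it --- arguing directly with inclusion maps founders on the non-exactness of $F$. A secondary, bookkeeping obstacle is the chart-independence of Whitney A invoked above, which I would settle exactly as for differentiable stratified spaces, transporting the limiting subspaces through the vector-bundle gluing isomorphisms $H$ of the injective structure; these act smoothly, hence continuously, on the relevant Grassmannian bundles, so they preserve both the convergence of the $V_{x_n}$ and the inclusion $V_{x_0}\subseteq W$.
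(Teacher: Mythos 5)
Your proof is correct and follows essentially the same route as the paper's: topologize $F(A)$ via the initial topology, then combine the Whitney A hypothesis with the continuity of $F$ on Grassmannians to verify the frontier condition by producing an approximating sequence in $F(A)|_R$, with the Whitney A condition for $p_F$ falling out of the same limit computation. You in fact supply more detail than the paper does --- in particular your step \textbf{(c)}, deducing $F(V_{x_0})\subseteq F(W)$ from the projection identity $P_{V_{x_0}}=P_W\circ P_{V_{x_0}}$ via the orthogonalization $Q$, and the explicit sequence $\bar\eta_n=P_{F(V_{x_n})}(\bar\eta)$, are both left implicit in the paper's argument.
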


\begin{proof}
	As we saw, we can topologize $F(A)$ by demanding the natural projection $p_F:F(A)\to X$ and the application of $F$ to all local trivialization $\phi:p^{-1}(U)\to U\times \bR^k$ as in equation (\ref{functtriv}) are continuous. Equipping $F(A)$ with the partition 
	\bge
	\Sigma_{F(A)}=\{F(A)|_S \ | \ S\in \Sigma_X\},
	\ene
	we just need to verify that the frontier condition holds. Towards this end, suppose $F(A)|_S\cap \overline{F(A)|_R}\neq\emptyset$. It then follows that $S\subset\overline{R}$. Suppose now that $a_0\in F(A)|_S$ and let $p(a_0)=x_0$. We will construct $\{a_n\}\subset F(A)|_R$ so that $\lim a_n=a_0$.
	
	For the sake of convenience, let us assume that $A\subset U\times \bR^k$ so that $F(A)\subset U\times F(\bR^k)$. Suppose $\{x_n\}\subset R$ is a sequence converging to $x_0$ such that $\lim A_{x_n}=W$ exists. Then, $A_{x_0}\subset W$ by the Whitney A condition. Using the previous Lemma, we then conclude that
	\bge
	\lim F(A_{x_n})=F(W)
	\ene
	and $F(A_{x_0})\subset F(W)$. Hence, we can find a sequence $\{a_n\}\subset F(A|_R)$ with $a_n\in A_{x_n}$ such that $\lim a_n=a_0$. Hence, the frontier condition is satisfied. Clearly Whitney A is also satisfied. 
\end{proof}

As a consequence of the above theorem, let us now discuss further the issue of functoriality. Let $p:A\to X$ and $q:B\to Y$ be simply stratified vector bundles and 
\bge
\begin{tikzcd}
	A\arrow[d,"p"]\arrow[r,"\psi"] & B\arrow[d,"q"]\\
	X\arrow[r,"f"] & Y
\end{tikzcd}
\ene
a morphism. Given any linear functor $F$, define a set-theoretic map $F(\psi):F(A)\to F(B)$ fibre-wise by
\begin{equation}\label{functmorph}
	F(\psi)|_{F(A)_x}:=F(\psi|_{A_x}):F(A)_x\to F(B)_x
\end{equation}

To guarantee such a map defines once again a morphism of stratified vector bundles when $F$ orthogonalizable and $A$ and $B$ are equipped with injective structures, we will have to impose a condition on $\psi$.

\begin{defs}\label{injectivemorph}
	Let $p:A\to X$ and $q:B\to Y$ be injective stratified vector bundles and 
	\bge
	\begin{tikzcd}
		A\arrow[d,"p"]\arrow[r,"\psi"] & B\arrow[d,"q"]\\
		X\arrow[r,"f"] & Y
	\end{tikzcd}
	\ene
	a morphism of stratified vector bundles. Say $(\psi,f)$ is {\bf morphism of injective stratified vector bundles} if for any $x\in X$ the following condition holds. There exists 
	\begin{itemize}
		\item[(i)] open subsets $U\subset X$ and $V\subset Y$ with $x\in U$ and $f(U)\subset V$;
		\item[(ii)] local trivializations $\alpha:p^{-1}(U)\to U\times \bR^k$ and $\beta:p^{-1}(V)\to V\times \bR^\ell$; and 
		\item[(iii)] a morphism of vector bundles
		\bge
		\begin{tikzcd}
			U\times\bR^k\arrow[d,"pr_1"]\arrow[r,"\eta"] & V\times \bR^\ell\arrow[d,"pr_1"]\\
			U\arrow[r,"f"] & V
		\end{tikzcd}
		\ene
		such that the diagram commutes
		\bge
		\begin{tikzcd}
			U\times\bR^k\arrow[r,"\eta"] & V\times \bR^\ell\\
			p^{-1}(U)\arrow[u,"\alpha"]\arrow[r,"\psi"] & q^{-1}(U)\arrow[u,"\beta"]
		\end{tikzcd}
		\ene
	\end{itemize}
\end{defs}

\begin{rem}
	It's clear that morphisms of injective stratified vector bundles are closed under composition.
\end{rem}

\begin{egs}
	If $X$ and $Y$ are Whitney A stratified spaces and $f:X\to Y$ is a smooth map, then $Tf:TX\to TY$ is a morphism of injective stratified vector bundles. 
\end{egs}

\begin{lem}\label{functonmorph}
	Suppose $p:A\to X$ and $q:B\to Y$ are Whitney A stratified vector bundles, 
	\bge
	\begin{tikzcd}
		A\arrow[d,"p"]\arrow[r,"\psi"] & B\arrow[d,"q"]\\
		X\arrow[r,"f"] & Y
	\end{tikzcd}
	\ene
	is a morphism of injective stratified vector bundles, and $F$ is an orthogonalizable linear functor. Then $F(\psi):F(A)\to F(B)$ as in Equation (\ref{functmorph}) is also a morphism of injective stratified vector bundles.
\end{lem}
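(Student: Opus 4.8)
The plan is to reduce everything to the local data supplied by Definition~\ref{injectivemorph} and push it through $F$, using that a linear functor turns a smoothly varying family of linear maps into a smoothly varying family of linear maps. To begin, Theorem~\ref{functonspace} already gives that $p_F\colon F(A)\to X$ and $q_F\colon F(B)\to Y$ are Whitney~A stratified vector bundles; recall that $F(A)$ carries the coarsest topology for which $p_F$ and all the maps $F(\phi)$ (with $\phi$ a local trivialization of $A$) are continuous, and similarly for $F(B)$, and that the family $\{F(\phi)\}$ constitutes an injective structure on $F(A)$. So it remains to produce, near each point of $X$, the trivializations and model vector-bundle morphism demanded by Definition~\ref{injectivemorph} for the pair $(F(\psi),f)$, and then to verify that $F(\psi)$ is a morphism of stratified vector bundles at all.

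Fix $x\in X$ and take $U\subset X$, $V\subset Y$ with $f(U)\subset V$, local trivializations $\alpha\colon p^{-1}(U)\to U\times\bR^k$ and $\beta\colon q^{-1}(V)\to V\times\bR^\ell$, and a vector-bundle morphism $\eta\colon U\times\bR^k\to V\times\bR^\ell$ over $f$ with $\eta\circ\alpha=\beta\circ\psi$, as guaranteed by Definition~\ref{injectivemorph}. Write $\eta(y,v)=(f(y),\eta_y v)$, so $y\mapsto\eta_y$ is a smooth map $U\to\Hom(\bR^k,\bR^\ell)$. Applying $F$ fibrewise, $F(\alpha)\colon p_F^{-1}(U)\to U\times F(\bR^k)$ and $F(\beta)\colon q_F^{-1}(V)\to V\times F(\bR^\ell)$ are local trivializations of $F(A)$ and $F(B)$, belonging to their injective structures. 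Set $F(\eta)(y,w):=(f(y),F(\eta_y)w)$. Since $F$ is a linear functor, the induced map $\Hom(\bR^k,\bR^\ell)\to\Hom(F(\bR^k),F(\bR^\ell))$ is linear, hence smooth, so $y\mapsto F(\eta_y)$ is smooth and $F(\eta)$ is a genuine smooth vector-bundle morphism over $f$. Finally, applying $F$ fibrewise to the identity $\eta_y\circ\alpha_y=\beta_{f(y)}\circ\psi_y$ and using that $F$ preserves composition yields $F(\eta)\circ F(\alpha)=F(\beta)\circ F(\psi)$, which is exactly condition~(iii) of Definition~\ref{injectivemorph} for $(F(\psi),f)$.

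It remains to see that $(F(\psi),f)$ is a morphism of stratified vector bundles in the first place. Compatibility with the projections, $q_F\circ F(\psi)=f\circ p_F$, is immediate from the fibrewise definition. If $S\in\Sigma_X$ and $R\in\Sigma_Y$ is the stratum with $f(S)\subset R$, then $\psi$ restricts to a smooth vector-bundle morphism $A|_S\to B|_R$ over $f|_S$, so $F(\psi)$ restricts to a set-theoretic vector-bundle morphism $F(A)|_S\to F(B)|_R$, whose smoothness is exhibited by the local model $F(\eta)$ above; in particular $F(\psi)$ is stratum-preserving. For continuity, note that the sets $p_F^{-1}(U)$ arising from the local data cover $F(A)$, and continuity is local on the source; on $p_F^{-1}(U)$ the map $F(\psi)$ lands in $q_F^{-1}(V)$, so, since a map into $F(B)$ is continuous as soon as its composites with $q_F$ and with the trivializations $F(\beta_i)$ are, it suffices to observe that $q_F\circ F(\psi)=f\circ p_F$ is continuous and that $F(\beta)\circ F(\psi)=F(\eta)\circ F(\alpha)$ is continuous, the remaining trivializations being handled by compatibility. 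Hence $(F(\psi),f)$ is a morphism of stratified vector bundles, and by the preceding paragraph it is a morphism of \emph{injective} stratified vector bundles.

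I expect the only point needing care to be the claim that $F$ carries a local-trivialization atlas of $A$ to a local-trivialization atlas of $F(A)$ --- that each $F(\phi)$ is again a topological embedding restricting to a vector-bundle morphism on every stratum, and that $F$ preserves compatibility of trivializations --- but this is precisely what underlies Theorem~\ref{functonspace} and so may be invoked directly. Everything else is diagram-chasing with the functor axioms and the initial-topology descriptions of $F(A)$ and $F(B)$.
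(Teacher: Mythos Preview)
Your proof is correct and follows essentially the same approach as the paper: pick the local data $(\alpha,\beta,\eta)$ from Definition~\ref{injectivemorph}, write $\eta=(f,H)$ with $H\colon U\to\Hom(\bR^k,\bR^\ell)$, push through $F$ using linearity of $\Hom(\bR^k,\bR^\ell)\to\Hom(F(\bR^k),F(\bR^\ell))$, and read off continuity of $F(\psi)$ from the initial topology on $F(B)$. One small slip: you call $y\mapsto\eta_y$ and $F(\eta)$ \emph{smooth}, but $U$ is only an open subset of a stratified space with no smoothness assumed, so ``continuous'' is the correct word (and is all that is needed, since the linear map on $\Hom$-spaces is automatically continuous); otherwise your argument is, if anything, more carefully detailed than the paper's.
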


\begin{proof}
	Since $\psi$ is a morphism of injective stratified vector bundles, around any $x\in X$, we may choose open subsets $U\subset X$, $V\subset Y$, trivializations $\alpha:p^{-1}(U)\to U\times \bR^k$, $\beta:q^{-1}(V)\to V\times \bR^\ell$, and vector bundle morphism $\eta:U\times \bR^k\to V\times \bR^\ell$ over $f$ so that the diagram commutes
	\begin{equation}\label{morphextend}
		\begin{tikzcd}
			U\times\bR^k\arrow[r,"\eta"] & V\times \bR^\ell\\
			p^{-1}(U)\arrow[u,"\alpha"]\arrow[r,"\psi"] & q^{-1}(U)\arrow[u,"\beta"]
		\end{tikzcd}
	\end{equation}
	Since $\eta$ is an actual vector bundle morphism follows that $\eta$ has the form $\eta=(f,H)$, where $H:U\to \Hom(\bR^k,\bR^\ell)$ is a continuous map. Define $F(H)$ by the composition
	\bge
	\begin{tikzcd}
		U\arrow[rr,"H"]\arrow[ddrr,swap,"F(H)"] && \Hom(\bR^k,\bR^\ell)\arrow[dd,"F"]\\
		&&\\
		&& \Hom(F(\bR^k),F(\bR^\ell))
	\end{tikzcd}
	\ene
	Clearly $F(H)$ is continuous and thus we get a vector bundle morphism 
	\bge
	F(\eta):=(f,\eta(H)):U\times F(\bR^k)\to V\times F(\bR^\ell)
	\ene
	over $f$. Applying $F$ to the diagram in Equation (\ref{morphextend}) we get a commutative diagram of maps between sets:
	\bge
	\begin{tikzcd}
		U\times F(\bR^k)\arrow[r,"F(\eta)"] & V\times F(\bR^\ell)\\
		p_F^{-1}(U)\arrow[u,"F(\alpha)"]\arrow[r,"F(\psi)"] & q_F^{-1}(U)\arrow[u,"F(\beta)"]
	\end{tikzcd}
	\ene
	By the definition of the topologies on $F(A)$ and $F(B)$, it follows that $F(\psi)$ is continuous and hence a morphism of injective stratified vector bundles.
\end{proof}

Let $\mathfrak{Asvb}$ denote the category whose objects are Whitney A stratified vector bundles and whose morphisms are morphisms of injective stratified vector bundles. As our final result, we get the following easy consequence of Theorem \ref{functonspace} and Lemma \ref{functonmorph}.

\begin{cor}
	Any orthogonalizable linear functor $F$ defines a functor $\mathfrak{Asvb}\to \mathfrak{Asvb}$.
\end{cor}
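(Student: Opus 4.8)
The plan is to build the functor directly from Theorem~\ref{functonspace} and Lemma~\ref{functonmorph}, so that only the verification of the functor axioms remains. On objects, to a Whitney A stratified vector bundle $p\colon A\to X$ I would assign $p_F\colon F(A)\to X$ together with the topology, the stratification $\Sigma_{F(A)}$, and the atlas of local trivializations $\{F(\phi_i)\}$ produced in Theorem~\ref{functonspace}; that theorem asserts the result is again a Whitney A stratified vector bundle, so the assignment lands in $\mathfrak{Asvb}$. On morphisms, to a morphism of injective stratified vector bundles $(\psi,f)\colon (p\colon A\to X)\to(q\colon B\to Y)$ I would assign $(F(\psi),f)$, where $F(\psi)\colon F(A)\to F(B)$ is the fibre-wise map of Equation~(\ref{functmorph}); Lemma~\ref{functonmorph} shows $(F(\psi),f)$ is again a morphism of injective stratified vector bundles, so this is well-defined.

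It then remains to check preservation of identities and of composition. For identities, since $F$ is a functor on $\mathfrak{Vect}_\bR$ we have $F(\id_{A_x})=\id_{F(A_x)}$ for every $x\in X$; as $F(\psi)$ is defined fibre-wise this gives $F(\id_A)=\id_{F(A)}$, and the base map is unchanged, so the identity morphism of $p\colon A\to X$ is sent to the identity morphism of $p_F\colon F(A)\to X$. For composition, let $(\psi,f)$ and $(\psi',f')$ be composable morphisms of injective stratified vector bundles; the remark following Definition~\ref{injectivemorph} guarantees $(\psi'\circ\psi,\,f'\circ f)$ is again such a morphism, and evaluating over a base point $x$ gives $F\bigl((\psi'\circ\psi)|_{A_x}\bigr)=F\bigl(\psi'|_{B_{f(x)}}\bigr)\circ F\bigl(\psi|_{A_x}\bigr)$ by functoriality of $F$ on $\mathfrak{Vect}_\bR$. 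Since each of $F(\psi'\circ\psi)$, $F(\psi')$, $F(\psi)$ is defined fibre-wise, this yields $F(\psi'\circ\psi)=F(\psi')\circ F(\psi)$, while the base maps compose as $f'\circ f$ by construction; hence $F$ respects composition.

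The two substantive inputs---the frontier condition for $F(A)$ and the continuity of $F(\psi)$---have already been discharged in Theorem~\ref{functonspace} and Lemma~\ref{functonmorph}, so no analytic work is left, and I expect no genuine obstacle. The only point warranting a moment's care is purely organisational: one must confirm that the injective structure placed on $F(A)$ in the object assignment is exactly the one that Lemma~\ref{functonmorph} uses (implicitly) when certifying $F(\psi)$ as a morphism of injective stratified vector bundles. This is immediate because in both places $F(A)$ carries the coarsest topology making $p_F$ and all the maps $F(\phi)$, for $\phi$ a local trivialization of $A$, continuous, with the induced atlas $\{F(\phi_i)\}$; so the two viewpoints coincide and functoriality follows.
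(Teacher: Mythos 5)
Your proposal is correct and follows exactly the route the paper intends: the paper offers no written proof, simply declaring the corollary an easy consequence of Theorem~\ref{functonspace} (object assignment) and Lemma~\ref{functonmorph} (morphism assignment), and your verification of the identity and composition axioms via the fibre-wise definition in Equation~(\ref{functmorph}) supplies precisely the routine details left implicit.
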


\bibliographystyle{amsalpha}
\bibliography{SVB}

\end{document}